\newcommand{\be}{\begin{equation}}
\newcommand{\ee}{\end{equation}}
\newcommand{\dalign}[1]{\[\begin{aligned} #1 \end{aligned}\]}
\newcommand{\euB}{\EuScript{B}}
\newcommand{\er}{\mathrm{e}}
\newcommand{\llsym}[1]{\,\mathop{\ll}\limits_{#1}\,}
\newcommand{\gch}{\chi}
\newcommand{\gchm}{q}
\newcommand{\ogch}{\overline\gch}
\newcommand{\RHgch}{{\tt RH}[\gch]}
\newcommand{\RHsimgch}{{\tt RH}_{\tt sim}[\gch]}
\newcommand{\LHgch}{{\tt LH}[\gch]}
\newcommand{\gcho}{\gch_\circ}
\newcommand{\ogcho}{\overline\gcho}
\newcommand{\RHsimgcho}{{\tt RH}_{\tt sim}[\gcho]}
\newcommand{\pch}{{\mathfrak X}}
\newcommand{\pchm}{{\mathfrak q}}
\newcommand{\opch}{\overline\pch}
\newcommand{\RHsimpch}{{\tt RH}_{\tt sim}[\pch]}
\newcommand{\LHpch}{{\tt LH}[\pch]}
\newcommand{\RHdagsimpch}{{\tt RH}_{\tt sim}^\dagger[\pch]}
\newcommand{\pcho}{\pch_\circ}
\newcommand{\pchom}{k}
\newcommand{\RHsimpcho}{{\tt RH}_{\tt sim}[\pcho]}
\newcommand{\one}{{\mathbbm 1}}
\newcommand{\RHone}{{\tt RH}[\one]}
\newcommand{\LHone}{{\tt LH}[\one]}
\newcommand{\RHsimstar}{{\tt RH}_{\tt sim}[\boldsymbol\star]}
\newcommand{\LHstar}{{\tt LH}[\boldsymbol\star]}
\newcommand{\otheta}{\overline\vartheta}
\newcommand{\tD}{{\widetilde D}}
\newcommand{\tL}{{\widetilde L}}
\newcommand{\oR}{{\cR_\bullet}}
\newcommand{\oV}{{\cV_\bullet}}
\title[A variant of the Linnik-Sprind\v zuk theorem]
{A variant of the Linnik-Sprind\v zuk theorem\break
for simple zeros of Dirichlet $L$-functions}
\author[W.~Banks]{William D.~Banks}
\address{Department of Mathematics, 
         University of Missouri, 
         Columbia MO 65211, USA.}
\email{bankswd@missouri.edu}
\date{\today}
\begin{document}

\begin{abstract}
For a primitive Dirichlet character $\pch$, a new hypothesis $\RHdagsimpch$
is introduced, which asserts that (1) all \emph{simple} zeros of $L(s,\pch)$ 
in the critical strip are located on the critical line,
and (2) these zeros satisfy some specific conditions on their
vertical distribution. We show that $\RHdagsimpch$ (for any $\pch$)
is a consequence of the \emph{generalized Riemann hypothesis}.

Assuming only the \emph{generalized Lindel\"of hypothesis}, we show that if
$\RHdagsimpch$ holds for one primitive character $\pch$, then it holds for \emph{every} such $\pch$.
If this occurs, then for every character~$\gch$ (primitive or not), all simple zeros of $L(s,\chi)$
in the critical strip are located on the critical line. In particular, Siegel zeros cannot exist
in this situation.
\end{abstract}

\thanks{MSC Numbers: Primary: 11M06, 11M26; Secondary: 11M20.}

\thanks{Keywords: generalized Riemann hypothesis, Riemann zeta function,
Lindel\"of hypothesis, simple zeros.}

%
%

\maketitle


\tableofcontents

\section{Introduction}

An old result of Sprind\v zuk~\cite{Sprind1,Sprind2}, which he obtained
by developing ideas of Linnik~\cite{Linnik}, asserts that the
\emph{generalized Riemann hypothesis} (GRH) holds for all
Dirichlet $L$-functions provided that the \emph{Riemann hypothesis} (RH)
is true and that certain conditions on the vertical distribution of the zeros
of $\zeta(s)$ are met. Specifically, Sprind\v zuk showed under RH that
every $L$-function $L(s,\chi)$ satisfies GRH
provided that the asymptotic formula
\[
\sum_{\gamma}|\gamma|^{i\gamma}\er^{-i\gamma-\pi|\gamma|/2}
\Bigl(x+2\pi i\frac{h}{k}\Bigl)^{-1/2-i\gamma}
=-\frac{\mu(k)}{x\sqrt{2\pi}\,\phi(k)}+O(x^{-1/2-\eps})
\]
holds as $x\to 0^+$ for any coprime integers $h,k$ with
$0<|h|\le k/2$, where the sum runs over the ordinates $\gamma$
of the nontrivial zeros of $\zeta(s)$. This is known as the
\emph{Linnik–Sprind\v zuk theorem}.
Some similar results have
been obtained by Fujii~\cite{Fujii1,Fujii2,Fujii3}, Suzuki~\cite{Suzuki},
Kaczorowski and Perelli~\cite{KacPer}, and the author~\cite{Banks1,Banks2,Banks3}.

In the present paper, we establish a variant
of the Linnik–Sprind\v zuk theorem focused on
the \emph{simple} zeros of Dirichlet \text{$L$-functions}.
Assuming the \emph{generalized Lindel\"of hypothesis},
we show that the horizontal and vertical distribution of the simple zeros
of any single \text{$L$-function} $L(s,\gch)$ can strongly influence
the horizontal and vertical
distribution of the simple zeros of any other Dirichlet $L$-function.
In particular, we give a new criterion for the \emph{nonexistence of 
Siegel zeros}.

Our results are formulated in \S\ref{sec:notation} after
the necessary notation has been introduced.
To prove the main theorem, we study the interplay
and consequences of various hypotheses on the $L$-functions attached
to Dirichlet characters $\gch$.

\bigskip\noindent{\sc Hypothesis $\RHgch$}:
{\it If $\rho=\beta+i\gamma$ is a zero of $L(s,\gch)$ with $\beta>0$, 
then $\beta=\frac12$.}

\bigskip\noindent In particular, $\RHone$ is the Riemann hypothesis, where $\one$ is the trivial
character defined by $\one(n)\defeq 1$ for all $n$. Also, GRH is equivalent to the
truth of $\RHgch$ for all characters $\gch$.

\bigskip\noindent{\sc Hypothesis $\RHsimgch$}:
{\it If $\rho=\beta+i\gamma$ is a {\tt simple} zero of $L(s,\gch)$ such that $\beta>0$, 
then $\beta=\frac12$.}

\bigskip\noindent{\sc Hypothesis $\RHsimstar$}:
{\it Hypothesis $\RHsimgch$ is true for {\tt every} character $\chi$.}

\bigskip\noindent The latter
two hypotheses lie at the heart of our work. $\RHsimgch$ is
a weak form of $\RHgch$ which asserts that the \emph{simple}
zeros of $L(s,\chi)$ in the critical strip all lie on the critical line;
nothing is assumed about the
trivial zeros of $L(s,\gch)$ (which are all simple)
or any zeros of multiplicity two or more
in the critical strip (that is, \emph{non-simple zeros}).

\bigskip\noindent{\sc Hypothesis $\LHgch$}:
{\it The function $L(s,\gch)$ satisfies the {\bf Lindel\"of bound}
\be\label{eq:LHbd}
L(\tfrac12+it,\gch)\llsym{ \gchm }\tau^\eps\qquad(t\in\R),
\ee
where $q\ge 1$ is the modulus of $\chi$, and $\tau=\tau(t)\defeq |t|+10$.}

\bigskip\noindent In particular, note that $\LHone$ is the classical \emph{Lindel\"of hypothesis}
for $\zeta(s)$. More generally, $\LHgch$ is (a weak form of) the \emph{generalized Lindel\"of hypothesis} for $L(s,\gch)$.

\bigskip\noindent{\sc Hypothesis $\LHstar$}:
{\it The hypothesis $\LHgch$ holds for {\tt every} character $\chi$.}

\begin{remark*}
The most important hypothesis from the perspective
of the present paper, namely $\RHdagsimpch$, can only be formulated
after some notation has been introduced; see \S\ref{sec:notation} below.
\end{remark*}

\section{Notation and statement of results}\label{sec:notation}

Following Riemann, the letter $s$ always denotes a complex variable, and we write
$\sigma\defeq\Re(s)$, and $t\defeq\Im(s)$. As in \eqref{eq:LHbd}, 
we put $\tau\defeq|t|+10$ for all $t\in\R$.

Any implied constants in the
symbols $\ll$, $O$, etc., may depend (where obvious) on the small parameter $\eps>0$;
any dependence on other parameters is indicated explicitly by the notation.
For example, \eqref{eq:LHbd} asserts that, for any $\eps>0$, the bound
$|L(\tfrac12+it,\gch)|\le C\tau^\eps$ holds for all $t\in\R$
with some constant $C>0$ that depends only on $ \gchm $ and $\eps$.

For an arbitrary character $\gch$, we make extensive use of the
function $D(s,\gch)$ defined in the half-plane $\{\sigma>1\}$ by
\[
D(s,\gch)\defeq\frac{L'(s,\gch)^2}{L(s,\gch)}=\sum_{n\in\N}\frac{\ell(n)\gch(n)}{n^s}
\]
and extended analytically to the complex plane.
Here, $\ell$ is the arithmetical function given by
\[
\ell(n)\defeq(\Lambda*\log)(n)=\ssum{a,b\in\N\\ab=n}\Lambda(a)\log b.
\]
Note that $0\le\ell(n)\le(\log n)^2$.
The function $D(s,\gch)$ is meromorphic with
a simple pole of residue $L'(\rho,\gch)$ at every simple zero $\rho$
of $L(s,\gch)$, and a pole of order three at $s=1$ 
in the case that $\gch$ is principal.
On the other hand, $D(s,\gch)$ is analytic in a neighborhood of any non-simple
zero $\rho$ of $L(s,\gch)$.

For a \emph{primitive} character $\pch$ mod~$\pchm$, we denote
\be\label{eq:red}
\kappa\defeq\begin{cases}
0&\quad\hbox{if $\pch(-1)=+1$,}\\
1&\quad\hbox{if $\pch(-1)=-1$,}\\
\end{cases}
\quad\tau(\pch)\defeq\sum_{a\bmod \pchm}\pch(a)\e(a/\pchm),
\quad\epsilon\defeq\frac{\tau(\pch)}{i^\kappa\sqrt{\pchm}},
\ee
where $\e(u)\defeq\er^{2\pi iu}$ for all $u\in\R$. The function defined by
\[
\cM_{\pch}(s)\defeq\epsilon\,2^s\pi^{s-1}\pchm^{1/2-s}
\Gamma(1-s)\sin\tfrac{\pi}{2}(s+\kappa)
\]
is familiar and plays an important r\^ole in analytic number theory, appearing as it does in the
asymmetric form of the functional equation:
\[
L(s,\pch)=\cM_{\pch}(s)L(1-s,\opch).
\]

\begin{lemma}\label{lem:Xxexpansion}
Let $\cI$ be a compact interval in $\R$. Uniformly for $c\in\cI$ and $t\ge 1$,
we have
\[
\cM_{\pch}(1-c-it)=\tau(\pch) \pchm^{c-1}\er^{-\pi i/4}
\exp\Big(it\log\Big(\frac{\pchm t}{2\pi\er}\Big)\Big)
\Big(\frac{t}{2\pi}\Big)^{c-1/2}\big\{1+O_\cI(t^{-1})\big\}.
\]
\end{lemma}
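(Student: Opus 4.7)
The plan is to substitute $s = 1-c-it$ directly into the formula
\[
\cM_\pch(s) = \epsilon\,2^s\pi^{s-1}\pchm^{1/2-s}\Gamma(1-s)\sin\tfrac{\pi}{2}(s+\kappa)
\]
and then carry out a Stirling-type asymptotic analysis of the transcendental factors $\Gamma(c+it)$ and $\sin\tfrac{\pi}{2}(1+\kappa-c-it)$ as $t\to\infty$. First I would collect the algebraic prefactors: using $\epsilon=\tau(\pch)/(i^{\kappa}\sqrt{\pchm})$, the factor $\epsilon\,\pchm^{1/2-s}$ simplifies to $\tau(\pch)\,i^{-\kappa}\pchm^{c-1+it}$, and combining with $2^{1-c-it}\pi^{-c-it}$ gives the ``size'' prefactor $\tau(\pch)\,i^{-\kappa}\pchm^{c-1}\cdot 2(2\pi)^{-c-it}\pchm^{it}$.

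Next I would apply the standard Stirling expansion in a vertical strip, namely $\Gamma(c+it)=\sqrt{2\pi}\,(c+it)^{c+it-1/2}\er^{-(c+it)}(1+O_\cI(t^{-1}))$, uniformly for $c\in\cI$. Writing $c+it=it(1+c/(it))$ and expanding the logarithm of $(1+c/(it))^{c+it-1/2}$ to first order produces the factor $\er^{c}(1+O_\cI(t^{-1}))$, so that
\[
\Gamma(c+it) = \sqrt{2\pi}\,t^{c-1/2}\,\er^{i\pi(c-1/2)/2}\,\er^{-\pi t/2}\,\er^{it\log t-it}\bigl\{1+O_\cI(t^{-1})\bigr\}.
\]
In parallel, expanding the sine via $\sin z = (\er^{iz}-\er^{-iz})/(2i)$ and discarding the exponentially small conjugate term gives
\[
\sin\tfrac{\pi}{2}(1+\kappa-c-it) = \tfrac{1}{2i}\,\er^{i\pi(1+\kappa-c)/2}\,\er^{\pi t/2}\bigl\{1+O(\er^{-\pi t})\bigr\}.
\]
The factors $\er^{\pm\pi t/2}$ cancel, which is the whole reason the product is only polynomially large in $t$.

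The remaining step is bookkeeping. The constant phase from the $\Gamma$ and sine factors is $\tfrac{1}{2i}\er^{i\pi(c-1/2)/2}\er^{i\pi(1+\kappa-c)/2}=\tfrac12\,\er^{-i\pi/4}\,i^{\kappa}$, and the $i^{\kappa}$ here cancels the $i^{-\kappa}$ from the prefactor. The remaining $t$-dependent oscillation is
\[
\pchm^{it}(2\pi)^{-it}\er^{it\log t-it}=\exp\Bigl(it\log\tfrac{\pchm t}{2\pi\er}\Bigr),
\]
and the polynomial factor is $2\cdot\tfrac{\sqrt{2\pi}}{2}\cdot(2\pi)^{-c}t^{c-1/2}=\sqrt{2\pi}\,(2\pi)^{-c}\,t^{c-1/2}=(t/(2\pi))^{c-1/2}$. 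Assembling everything yields precisely the claimed identity, and the error term $1+O_\cI(t^{-1})$ inherits its uniformity in $c\in\cI$ from Stirling's formula.

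The calculation is essentially mechanical; the only genuine hazard is sign and phase bookkeeping — verifying that the two $\er^{\pm\pi t/2}$ factors cancel exactly, that the constant phase really collapses to $\er^{-i\pi/4}$, and that the $i^{\pm\kappa}$ factors cancel so that the parity of $\pch$ does not appear in the final formula. This is where I would be most careful and would double-check the cases $\kappa=0$ and $\kappa=1$ separately.
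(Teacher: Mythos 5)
Your proposal is correct, and it carries out exactly the computation that the paper delegates (the paper's ``proof'' of Lemma~\ref{lem:Xxexpansion} is simply a citation to \cite[Lemma~2.1]{Banks3}, which is the same Stirling-type expansion). The bookkeeping checks out: the prefactor $\epsilon\,2^{1-c-it}\pi^{-c-it}\pchm^{c+it-1/2}$ gives $\tau(\pch)i^{-\kappa}\pchm^{c-1}\cdot 2(2\pi)^{-c-it}\pchm^{it}$; Stirling applied to $\Gamma(c+it)$, with $(c+it)^{c+it-1/2}=(it)^{c+it-1/2}(1-ic/t)^{c+it-1/2}$, gives $\sqrt{2\pi}\,t^{c-1/2}\er^{i\pi(c-1/2)/2}\er^{-\pi t/2}\er^{it\log t-it}\{1+O_\cI(t^{-1})\}$; the sine factor gives $\tfrac{1}{2i}\er^{i\pi(1+\kappa-c)/2}\er^{\pi t/2}\{1+O(\er^{-\pi t})\}$; the $\er^{\pm\pi t/2}$ factors cancel; the phase collapses to $\tfrac12\er^{-i\pi/4}i^{\kappa}$, whose $i^{\kappa}$ cancels the $i^{-\kappa}$; the oscillatory parts combine to $\exp(it\log(\pchm t/(2\pi\er)))$; and the polynomial factor $2\cdot\tfrac12\cdot\sqrt{2\pi}(2\pi)^{-c}t^{c-1/2}=(t/(2\pi))^{c-1/2}$. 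Uniformity in $c\in\cI$ follows from uniform Stirling in a vertical strip since $t\ge 1$ keeps $c+it$ away from the negative real axis.
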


\begin{proof}
See Banks \cite[Lemma 2.1]{Banks3}.
\end{proof}

The next result is a variant of Conrey, Ghosh, and Gonek \cite[Lemma~1]{ConGhoGon}.
The proof relies on Gonek~\cite[Lemma~2]{Gonek} and
is based on the stationary phase method.

\begin{lemma}\label{lem:ConGhoGon}
Uniformly for $v>0$ and $c\in[\frac{1}{10},2]$, we have
\[
\frac{1}{2\pi i}\int_{c+i}^{c+iT}v^{-s}\cM_{\pch}(1-s)\,ds
=\begin{cases}
\tau(\pch)\pchm^{-1}\e(-v/\pchm)+E(\pchm,T,v)
&\quad\hbox{if $\frac{\pchm}{2\pi}<v\le \frac{\pchm T}{2\pi}$},\\ \\
E(\pchm,T,v)&\quad\hbox{otherwise},\\
\end{cases}
\]
where
\[
E(\pchm,T,v)\ll\frac{\pchm^{c-1/2}}{v^c}
\bigg(T^{c-1/2}+\frac{T^{c+1/2}}{|T-2\pi v/\pchm|+T^{1/2}}+1\bigg).
\]
\end{lemma}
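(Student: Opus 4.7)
The plan is to reduce the contour integral to an oscillatory real integral and then apply the stationary phase method. Parameterize the line segment by $s=c+it$ with $t\in[1,T]$, so that $ds=i\,dt$, and substitute the asymptotic formula of Lemma~\ref{lem:Xxexpansion} for $\cM_\pch(1-c-it)$. Combining $v^{-s}=v^{-c}\exp(-it\log v)$ with the exponential produced by the lemma, the integral takes the form
\[
\frac{\tau(\pch)\pchm^{c-1}\er^{-\pi i/4}}{2\pi v^c}\int_1^T\Big(\frac{t}{2\pi}\Big)^{c-1/2}\exp\big(i\varphi(t)\big)\,dt + R,
\]
where the phase is $\varphi(t)\defeq t\log(\pchm t/(2\pi\er v))$ and $R$ absorbs the contribution of the $O(t^{-1})$ tail in Lemma~\ref{lem:Xxexpansion}.

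Next I would analyze $\varphi$. Differentiation gives $\varphi'(t)=\log(\pchm t/(2\pi v))$ and $\varphi''(t)=1/t>0$, so there is a unique stationary point at $t_0\defeq 2\pi v/\pchm$, and this point lies in $[1,T]$ precisely when $\pchm/(2\pi)<v\le \pchm T/(2\pi)$. Since $\varphi(t_0)=-t_0=-2\pi v/\pchm$, the classical saddle point formula predicts a contribution proportional to $\sqrt{2\pi/|\varphi''(t_0)|}\,\er^{i\pi/4}\exp(i\varphi(t_0))=\sqrt{2\pi t_0}\,\er^{i\pi/4}\e(-v/\pchm)$. Multiplying this by the amplitude $(t_0/2\pi)^{c-1/2}$ and the prefactors and simplifying, the $\er^{\pm i\pi/4}$ factors cancel and the powers of $v$ and $\pchm$ combine to give exactly $\tau(\pch)\pchm^{-1}\e(-v/\pchm)$, which is the main term in the lemma.

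To make this rigorous I would invoke the uniform stationary phase estimate of Gonek~\cite[Lemma~2]{Gonek}, which is tailored to integrals of this precise form and outputs the main term together with an explicit error of the shape stated here. The three pieces of $E(\pchm,T,v)$ then have transparent origins: the piece $\pchm^{c-1/2}/v^c$ absorbs the lower endpoint $t=1$ and lower order contributions from $R$; the piece $\pchm^{c-1/2}T^{c-1/2}/v^c$ is obtained by estimating the $O(t^{-1})$ tail trivially against the amplitude on $[1,T]$; and the piece involving $|T-2\pi v/\pchm|+T^{1/2}$ in the denominator is the non-stationary contribution at the upper endpoint, arising from an integration by parts that produces a factor $1/|\varphi'(T)|$, where $\varphi'(T)=\log(T/t_0)$ has size comparable to $|T-t_0|/T$ when $T$ is near $t_0$, with the extra $T^{1/2}$ marking the width of the saddle-point window that has already been counted in the main term.

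The principal obstacle is the careful bookkeeping of the upper endpoint error when $T$ is close to the saddle point $t_0$: this is precisely the regime in which the main term is being switched on or off, so the endpoint error and the saddle point contribution must be tracked simultaneously. For this reason it is cleaner to apply Gonek's lemma as a black box than to redo the stationary phase analysis from scratch.
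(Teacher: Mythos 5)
Your proposal matches the approach the paper uses: the paper's own proof is a citation to Banks~\cite[Lemma 2.3]{Banks3}, which (as the remark before the lemma states) is based on Gonek~\cite[Lemma~2]{Gonek} and the stationary phase method, and this is exactly the route you take — substitute the Lemma~\ref{lem:Xxexpansion} asymptotic, extract the phase $\varphi(t)=t\log(\pchm t/(2\pi\er v))$ with stationary point $t_0=2\pi v/\pchm$, and apply Gonek's lemma for the uniform error. Your saddle-point bookkeeping is correct: $\varphi(t_0)=-t_0$ gives $\e(-v/\pchm)$, the $\er^{\pm i\pi/4}$ factors cancel, and the powers of $v$ and $\pchm$ collapse to $\tau(\pch)\pchm^{-1}$, recovering the stated main term.
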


\begin{proof}
See Banks \cite[Lemma 2.3]{Banks3}.
\end{proof}

Recall the Laurent series expansion of $\zeta(s)$ at $s=1$
(see, e.g., \cite[Prop.~10.3.19]{Cohen}):
\[
\zeta(s)=\frac1{s-1}+\sum_{n=0}^\infty\frac{(-1)^n}{n!}\,\gamma_n(s-1)^n,
\]
where $\{\gamma_n\}$ are the \emph{Stieltjes constants} given by
\[
\gamma_n\defeq\lim_{x\to\infty}
\bigg(\sum_{k\le x}\frac{(\log k)^n}{k}-\frac{(\log x)^{n+1}}{n+1}\bigg)
\qquad(n\ge 0).
\]
In particular, $\gamma_0$ is the \emph{Euler-Mascheroni constant}. For any $ \gchm $, let
\[
g_q(s)\defeq\prod_{p\,\mid\, \gchm }(1-p^{-s}),
\]
and denote
\dalign{
P_{\gchm} (X)&\defeq\tfrac12g_{\gchm} (1)X(\log X)^2-(1+\gamma_0)g_{\gchm} (1)X\log X\\
&\qquad+~\big((1+\gamma_0+\gamma_0^2+3\gamma_1)g_{\gchm} (1)
-\gamma_0g_{\gchm} '(1)-\tfrac12g_{\gchm} ''(1)\big)X.
}
This function is defined so that
\be\label{eq:residue}
\mathop{{\rm Res}}\limits_{s=1}D(s,\gch_{0, \gchm })\frac{X^s}{s}=P_{\gchm} (X)
\qquad(X>0),
\ee
where $\gch_{0, \gchm }$ is the principal character mod~$ \gchm $.

The following hypothesis on a primitive character $\pch$, despite its quite technical formulation, 
is crucial for understanding the relationship between simple zeros of different
Dirichlet $L$-functions.

\bigskip\noindent{\sc Hypothesis $\RHdagsimpch$}:
{\it The primitive character $\pch$ mod~$\pchm$
satisfies $\RHsimpch$, and for any rational
number $\xi=h/k$ with $h,k>0$ and $(h,k)=1$,
any $\euB\in C_c^\infty(\R^+)$, and  $X\ge 10$, we have
\be\label{eq:eureka}
\ssum{\rho=\frac12+i\gamma\\\gamma>0}\xi^{-\rho}
L'(\rho,\pch\cdot\gch_{0,\pchm k})
\cM_{\opch}(1-\rho)\euB\(\frac{\gamma}{2\pi\xi X}\)
-C_{\pch,\xi}\cdot F_{\pchm,k}(X)
\llsym{\pchm,\xi,\euB}X^{1/2+\eps},
\ee
where the sum runs over simple zeros 
of $L(s,\pch)$ in the critical strip,
\be\label{eq:CXxi-defn}
C_{\pch,\xi}\defeq\begin{cases}
\displaystyle{\frac{\opch(h)\pch(k)\mu(k)}{\phi(\pchm k)}}
&\quad\hbox{if $(h,\pchm k)=1$},\\
0&\quad\hbox{otherwise},\\
\end{cases} 
\ee
and
\be\label{eq:FXxiX-defn}
F_{\pchm,k}(X)\defeq\int_0^\infty\euB(u/(\pchm X))P'_{\pchm k}(u)\,du.
\ee
}

Observe that $\RHdagsimpch$ is formulated
entirely in terms of the simple zeros of a single $L$-function $L(s,\pch)$;
the hypothesis asserts nothing about any zeros of higher order
that might exist.
For characters $\pch_1\ne\pch_2$,
there is no reason \emph{a priori} to expect that the hypotheses
${\tt RH}_{\tt sim}^\dagger[\pch_1]$
and ${\tt RH}_{\tt sim}^\dagger[\pch_2]$ are related. However,
our principal theorem reveals a close connection.

%

\begin{theorem}\label{thm:main} 
Assume $\LHstar$. For each primitive character $\pch$, the
hypotheses $\RHdagsimpch$ and $\RHsimstar$ are equivalent.
\end{theorem}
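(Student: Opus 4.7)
The plan is to derive both implications from a single contour-integral identity. For a primitive $\pch$ mod $\pchm$, a rational $\xi=h/k$ with $(h,k)=1$, and $\euB\in C_c^\infty(\R^+)$, the natural starting point is
\[
I(\pch,\xi,X,\euB)\defeq\frac{1}{2\pi i}\int_{(2)}\xi^{-s}\,D(s,\pch\cdot\gch_{0,\pchm k})\,\cM_{\opch}(1-s)\,\widetilde\euB(s)\,X^s\,ds,
\]
with $\widetilde\euB$ an appropriate Mellin transform of $\euB$. Since $D=(L')^2/L$, the integrand has simple poles precisely at the simple zeros of $L(s,\pch\cdot\gch_{0,\pchm k})$, which in the critical strip coincide with the simple zeros of $L(s,\pch)$. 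On the other hand, applying Perron's formula together with the Gauss-sum expansion of $\e(nh/(\pchm k))$ into Dirichlet characters mod $\pchm k$, the integral $I$ can be evaluated as a weighted sum $\sum_n\ell(n)\pch(n)\gch_{0,\pchm k}(n)\e(nh/(\pchm k))\Phi(n/X)$, whose principal-character piece—using the pole of $D(s,\gch_{0,\pchm k})$ at $s=1$ and the identity \eqref{eq:residue}—produces precisely the main term $C_{\pch,\xi}F_{\pchm,k}(X)$ of \eqref{eq:eureka}.

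For the forward direction ($\RHsimstar\Rightarrow\RHdagsimpch$), under $\RHsimstar$ the simple zeros of $L(s,\pch)$ lie on the critical line, so $\RHsimpch$ holds. Shifting the contour in $I$ to $\Re(s)=-\eps$, the residues produce the LHS of \eqref{eq:eureka}, and it remains to bound the shifted integral by $X^{1/2+\eps}$. Applying the functional equation to $L(s,\pch\cdot\gch_{0,\pchm k})$, the shifted integrand is converted into $L$- and $L'$-values near $\Re(s)=1+\eps$, which are controlled by a power of $\tau$ under $\LHstar$ (with Cauchy's estimate for $L'$). Lemma~\ref{lem:Xxexpansion} supplies the asymptotic for $\cM_{\opch}(1-s)$, and Lemma~\ref{lem:ConGhoGon} identifies the stationary-phase concentration at $v\asymp\pchm\xi X$, yielding the desired $X^{1/2+\eps}$ bound.

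For the reverse direction ($\RHdagsimpch\Rightarrow\RHsimstar$), I would invert the preceding identification, using $\LHstar$ to legitimize interpreting \eqref{eq:eureka} as an asymptotic for the smoothed sum $\sum_n\ell(n)\pch(n)\gch_{0,\pchm k}(n)\e(nh/(\pchm k))\Phi(n/X)$. Gauss-sum duality then expresses $\e(nh/(\pchm k))$ as a linear combination of Dirichlet characters $\psi$ mod $\pchm k$, and as $\psi$ varies the products $\pch\cdot\psi$ range over all characters of modulus $\pchm k$. Taking appropriate linear combinations of \eqref{eq:eureka} over $h$ mod $\pchm k$ isolates the contribution of each $\pch\cdot\psi$, producing an analogous asymptotic for a zero-sum attached to $L(s,\pch\cdot\psi)$. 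A hypothetical simple zero $\rho_0=\beta_0+i\gamma_0$ with $\beta_0>\tfrac12$ of any $L(s,\gch)$ would appear as a simple zero of $L(s,\pch\cdot\psi)$ for suitable $\psi$ and $k$; choosing $\euB$ localized near the ordinate $\gamma_0/(2\pi\xi X)$ would force a contribution of size $\asymp X^{\beta_0}$ to the isolated zero-sum, contradicting the $X^{1/2+\eps}$ error. This gives $\RHsimstar$.

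The principal obstacle is the reverse direction, specifically the quantitative isolation of a single off-line simple zero from the combined zero-sum, while keeping the aggregate contribution of the on-line simple zeros bounded by $X^{1/2+\eps}$. This requires $\LHstar$ in a uniform, explicit form, both to control mean-squares of $L'$-values over zero heights and to justify the passage from the additive-character framework of \eqref{eq:eureka} to the multiplicative-character setting of an arbitrary $L(s,\gch)$.
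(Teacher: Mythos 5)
Your forward direction is structurally close to the paper's: the key intermediate object is indeed the contour integral $\frac{1}{2\pi i}\int D(s,\gch)\,\xi^{-s}\cM_{\opch}(1-s)\,ds$ around a rectangle, and the paper's Theorem~\ref{thm:vonMangoldt-twist} and Corollary~\ref{cor:vonMangoldt-twist2} do exactly the contour-shift-plus-stationary-phase you describe (although the left edge sits just left of $\sigma=\tfrac12$, not at $\sigma=-\eps$, since $D(s,\gch)$ has poles on the critical line under $\RHsimpch$; pushing to $-\eps$ would cross them). The main term is then extracted by Gauss-sum decomposition into Dirichlet characters mod $\pchm k$ and invocation of the Chebyshev-sum asymptotic (Lemma~\ref{lem:aloevera}). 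So far so good.

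The reverse direction has a genuine gap. What the linear combination over $h$ (Gauss-sum duality applied to $\vartheta=\ogch\cdot\gcho$) actually isolates is a smoothed \emph{Chebyshev} sum
\[
\cW=\sum_n\ell(n)\gcho(n)\euB(n/(\pchm X)),
\]
not a ``zero-sum attached to $L(s,\pch\cdot\psi)$.'' The LHS of \eqref{eq:eureka} is a sum over simple zeros of $L(s,\pch)$, and averaging over $h$ changes the weights but never the zero set; you only get to $\gcho$ on the $n$-side, after applying Corollary~\ref{cor:vonMangoldt-twist2} to convert the zero-sum to a character sum. Consequently, the final step ``a single off-line simple zero $\rho_0$ of $L(s,\gcho)$ forces a contribution $\asymp X^{\beta_0}$ to the zero-sum'' has no object to attach to: there is no zero-sum for $L(s,\gcho)$ anywhere in the argument. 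Moreover, trying to localize $\euB$ near $\gamma_0/(2\pi\xi X)$ simultaneously localizes $\euB(n/(\pchm X))$ in $n$ on the other side, so the asserted $X^{\beta_0}$ blow-up does not materialize directly. What is needed instead is a Mellin-inversion (Tauberian-style) argument showing that the bound $\cW\ll X^{1/2+\eps}$ forces $D(s,\gcho)$ to continue analytically past $\sigma=\tfrac12$ (away from $s=1$), hence to have no simple-zero poles there. This is precisely the equivalence $(iii)\Leftrightarrow(i)$ of the paper's Theorem~\ref{thm:ultraclean}, which is the missing ingredient in your reverse implication. The isolation step you propose is correct and matches the paper; the missing piece is that the conclusion must pass through the Chebyshev-sum criterion, not a direct zero-localization.
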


Thus, assuming $\LHstar$, Theorem~\ref{thm:main} implies that
${\tt RH}_{\tt sim}^\dagger[\pch_1]
\Longleftrightarrow {\tt RH}_{\tt sim}^\dagger[\pch_2]$
for all primitive characters $\pch_1$~and~$\pch_2$.

\begin{corollary}\label{cor:A} 
Under {\rm GRH}, $\RHdagsimpch$ holds for every primitive character~$\pch$.
\end{corollary}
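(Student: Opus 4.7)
The plan is to deduce Corollary \ref{cor:A} directly from Theorem \ref{thm:main}. Under GRH, I must verify the two hypotheses of that theorem, namely $\LHstar$ and $\RHsimstar$, and then the conclusion of the corollary follows at once.

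First, $\RHsimstar$ is immediate from GRH. By definition, GRH asserts $\RHgch$ for every Dirichlet character $\gch$, meaning that \emph{every} zero $\rho=\beta+i\gamma$ of $L(s,\gch)$ with $\beta>0$ satisfies $\beta=\tfrac12$. In particular, every \emph{simple} such zero lies on the critical line, so $\RHsimgch$ holds for every $\gch$, and hence $\RHsimstar$ holds.

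Second, GRH implies the generalized Lindel\"of hypothesis by the classical convexity argument. Under $\RHgch$, one combines the Hadamard product for the completed $L$-function with the Borel--Carath\'eodory inequality to obtain $\log L(\tfrac12+it,\gch)\ll_\gchm \log\log\tau$, which is far stronger than the weak bound \eqref{eq:LHbd} defining $\LHgch$. Applying this for every $\gch$ yields $\LHstar$. With $\LHstar$ and $\RHsimstar$ both in force, Theorem \ref{thm:main} applied to any primitive $\pch$ delivers $\RHdagsimpch$, which is the assertion of the corollary.

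No real obstacle arises here: the corollary is a direct combination of two standard consequences of GRH with the main theorem. The only minor point to verify is that the classical derivation of Lindel\"of from GRH produces an implied constant depending on the modulus $\gchm$ in the manner required by \eqref{eq:LHbd}, which is immediate from the proof. If one wished to bypass Theorem \ref{thm:main} entirely, the much harder alternative would be to verify \eqref{eq:eureka} directly: write the left-hand side as a contour integral of $v^{-s} D(s,\pch\cdot\gch_{0,\pchm k})\cM_{\opch}(1-s)\widehat{\euB}(\,\cdot\,)\,ds$ over a tall rectangle with right edge at $\Re s=1+\eps$, shift the contour to $\Re s=-\eps$, pick up the main term $C_{\pch,\xi}\,F_{\pchm,k}(X)$ from the triple pole of $D(s,\pch\cdot\gch_{0,\pchm k})$ at $s=1$ via \eqref{eq:residue} and \eqref{eq:FXxiX-defn}, and control the shifted integral using the functional equation asymptotics of Lemma \ref{lem:Xxexpansion} together with the convexity bounds afforded by $\LHstar$. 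Fortunately, going through Theorem \ref{thm:main} sidesteps all of this machinery.
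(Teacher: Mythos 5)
Your proposal is correct and matches the paper's proof exactly: the paper also observes that GRH implies both $\LHstar$ and $\RHsimstar$, and then invokes Theorem~\ref{thm:main}. The extra detail you give on the classical deduction of the Lindel\"of bound from GRH, and the sketch of a direct verification of \eqref{eq:eureka}, are both reasonable but not needed.
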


This is clear since GRH implies both $\LHstar$ and $\RHsimstar$.

\begin{corollary}\label{cor:main2} 
Assume $\LHstar$, and suppose $\RHdagsimpch$ is true for some
primitive character~$\pch$. Then the simple zeros of any Dirichlet
$L$-function $L(s,\gch)$ lie on the critical line. In particular,
Siegel zeros do not exist.
\end{corollary}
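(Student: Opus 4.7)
The plan is to derive both conclusions of Corollary~\ref{cor:main2} as essentially immediate consequences of Theorem~\ref{thm:main}. First I would invoke the theorem: under $\LHstar$, the hypothesis $\RHdagsimpch$ for some primitive $\pch$ yields $\RHsimstar$, which by definition asserts that $\RHsimgch$ holds for \emph{every} Dirichlet character $\gch$ (primitive or not). Unwinding the definition of $\RHsimgch$, this says precisely that any simple zero $\rho=\beta+i\gamma$ of $L(s,\gch)$ with $\beta>0$ must satisfy $\beta=\tfrac12$, which is the first assertion of the corollary.

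For the ``in particular'' clause on Siegel zeros, I would recall the classical convention: a Siegel zero is a real zero $\beta$ of $L(s,\gch)$ for a real non-principal character $\gch$ lying very close to $1$, and any such zero is known to be simple. (This simplicity is a standard consequence of Landau's result that $L(s,\gch)$ admits at most one real zero in an interval of the form $(1-c/\log\gchm,1)$, together with elementary monotonicity/positivity arguments for $L(s,\gch)$ on the real line.) Any Siegel zero therefore lies in $(\tfrac12,1)$ and is simple, so its existence would immediately contradict the first conclusion; hence Siegel zeros cannot exist.

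The only obstacle is interpretive rather than mathematical: one must adopt a convention of ``Siegel zero'' for which simplicity is either built in or provable by the classical argument cited above, so that the conclusion of $\RHsimstar$ can actually be brought to bear. Once that point is settled, the corollary reduces to invoking Theorem~\ref{thm:main} and reading off definitions, with no substantive calculation required.
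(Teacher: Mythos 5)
Your proposal is correct and follows exactly the argument the paper intends but does not write out: invoke Theorem~\ref{thm:main} to pass from $\RHdagsimpch$ (under $\LHstar$) to $\RHsimstar$, unwind the definition of $\RHsimgch$ to get the first assertion, and then rule out Siegel zeros by noting that any exceptional real zero $\beta\in(\tfrac12,1)$ of $L(s,\gch)$ for $\gch$ real and nonprincipal is necessarily simple, hence would violate $\RHsimgch$. You are also right to flag that the simplicity of Siegel zeros is the one nontrivial ingredient needed for the ``in particular'' clause; this is the standard Landau fact and is precisely what makes the hypothesis $\RHsimstar$ (which controls only simple zeros) strong enough to exclude them.
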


\section{Bounding $D(s,\gch)$}

For any real $\sigma_0>0$, let $\oR(\sigma_0)$ and $\cR(\sigma_0)$
be the closed regions defined by
\[
\oR(\sigma_0)\defeq\{s:\sigma\ge\sigma_0,~|s-1|\ge\tfrac{1}{25}\}
\mand
\cR(\sigma_0)\defeq\{s:\sigma\ge\sigma_0\};
\]
see Figure~\ref{fig:regionRsig} below. Let $\oV(\sigma_0)$ [resp.~$\cV(\sigma_0)$]
be the vector space of consisting of all
meromorphic functions~$F$ that satisfy, for every $\eps>0$, the bound
\be\label{eq:LHsetbd}
F(s)\llsym{F,\sigma_0}\tau^{\lambda(\sigma)+\eps},
\qquad\lambda(\sigma)\defeq\max\{0,\tfrac12-\sigma\},
\ee
uniformly at all points $s$ in $\oR(\sigma_0)$ [resp.~$\cR(\sigma_0)$].
\begin{figure}[H]
\includegraphics[width=4in]{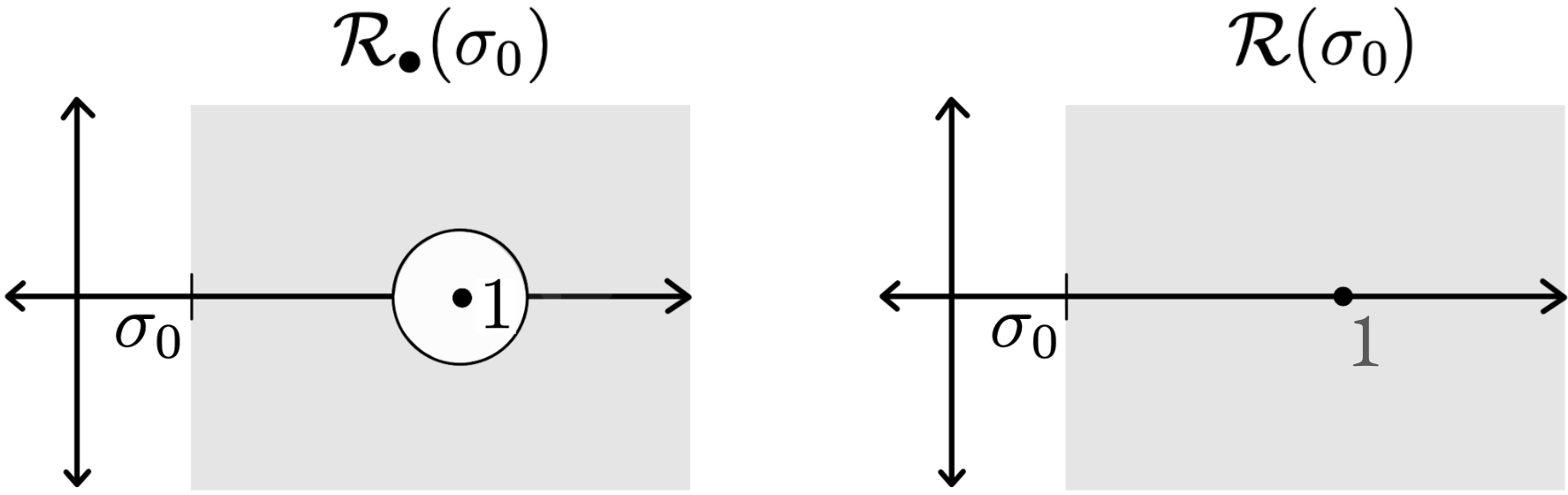}
\caption{}\label{fig:regionRsig}
\end{figure}

In this section, we consider the problem of bounding the function $D(s,\gch)$
introduced in~\S\ref{sec:notation}, where $\gch$ is an arbitrary
character mod~$ \gchm $. We denote by $\gch_{0, \gchm }$ the principal character mod~$ \gchm $,
which is also the indicator function of integers coprime to~$ \gchm $.
We also denote
\be\label{eq:defn-fchis}
\tL(s,\gch)\defeq L(s,\gch)-\frac{\delta_{\gch} c_{\gchm} }{s-1}
\ee
with
\[
\delta_{\gch}\defeq\begin{cases}
1&\quad\hbox{if $\gch$ is principal},\\
0&\quad\hbox{if $\gch$ is nonprincipal},\\
\end{cases}
\mand
c_{\gchm} \defeq\frac{\phi( \gchm )}{ \gchm },
\]
where $\phi$ is the Euler totient function.
For any choice of $\gch$, the function $\tL(s,\gch)$ extends to an entire
function. Since \eqref{eq:defn-fchis} implies the estimate
\be\label{eq:cpt-est}
\tL(s,\gch)=L(s,\gch)+O(1)\qquad(s\in\oR(\tfrac1{25})),
\ee
the hypothesis $\LHgch$ admits the following equivalent formulation.

\bigskip\noindent{\sc Hypothesis $\LHgch$}:
{\it The function $\tL(s,\gch)$ satisfies the Lindel\"of bound}
\be\label{eq:LHbd2}
\tL(\tfrac12+it,\gch)\llsym{ \gchm }\tau^\eps\qquad(t\in\R).
\ee

\begin{lemma}\label{lem:cicada-1}
The following statements are equivalent:
\begin{itemize}
\item[$(i)$] $\tL(s,\gch)$ belongs to $\cV(\frac1{25})$;
\item[$(ii)$] $\LHgch$ is true.
\end{itemize}
\end{lemma}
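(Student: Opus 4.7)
The direction (i)$\Rightarrow$(ii) is immediate: specializing the bound defining $\cV(\tfrac{1}{25})$ to $s=\tfrac12+it$ gives $\lambda(\sigma)=0$ and hence $\tL(\tfrac12+it,\gch)\ll_\gchm\tau^\eps$, which is precisely \eqref{eq:LHbd2}.

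For the converse (ii)$\Rightarrow$(i), the plan is a two-strip Phragm\'en--Lindel\"of argument combined with the functional equation, exploiting the fact that $\tL(s,\gch)$ is \emph{entire}. First I would handle the strip $\tfrac12\le\sigma\le 2$. On $\sigma=2$ the Dirichlet series for $L(s,\gch)$ converges absolutely, so $L(2+it,\gch)\ll 1$; subtracting the pole term shows $\tL(2+it,\gch)\ll 1$. By~(ii), $\tL(\tfrac12+it,\gch)\ll_\gchm\tau^\eps$. A standard convexity bound for Dirichlet $L$-functions furnishes $\tL(s,\gch)\ll\tau^A$ for some $A$ uniformly on the strip, which supplies the polynomial-growth hypothesis for Phragm\'en--Lindel\"of; the conclusion is $\tL(s,\gch)\ll_\gchm\tau^\eps$ throughout $\tfrac12\le\sigma\le 2$, extending trivially to $\sigma\ge 2$. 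Since $\lambda(\sigma)=0$ for $\sigma\ge\tfrac12$, this is exactly \eqref{eq:LHsetbd} in that region.

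Next I would extend to the strip $\tfrac{1}{25}\le\sigma\le\tfrac12$ via the functional equation. Let $\pch^*$ denote the primitive character of conductor $\gchm^*\mid\gchm$ inducing $\gch$, so that
\[
L(s,\gch)=L(s,\pch^*)\prod_{p\mid\gchm,\,p\nmid\gchm^*}(1-\pch^*(p)p^{-s}).
\]
The finite product is $\ll_\gchm 1$ on $\sigma\ge\tfrac{1}{25}$ and bounded away from $0$ on $\sigma=\tfrac12$ (each factor satisfies $|1-\pch^*(p)p^{-s}|\ge 1-p^{-1/2}>0$), so the Lindel\"of bound transfers between $L(s,\gch)$ and $L(s,\pch^*)$. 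Applying the previous step to $\pch^*$ yields $L(1-s,\overline{\pch^*})\ll_\gchm\tau^\eps$ for $-1\le\sigma\le\tfrac12$. The functional equation $L(s,\pch^*)=\cM_{\pch^*}(s)L(1-s,\overline{\pch^*})$, combined with Lemma~\ref{lem:Xxexpansion} (which gives $\cM_{\pch^*}(s)\ll_\gchm\tau^{1/2-\sigma}$ uniformly on $\sigma\in[\tfrac{1}{25},\tfrac12]$, using symmetry in~$t$), then produces $L(s,\pch^*)\ll_\gchm\tau^{1/2-\sigma+\eps}=\tau^{\lambda(\sigma)+\eps}$. The Euler-product identity transfers this bound to $L(s,\gch)$, and \eqref{eq:cpt-est} transfers it to $\tL(s,\gch)$ on $\oR(\tfrac{1}{25})$; since $\tL$ is entire, it is bounded by compactness in any neighborhood of $s=1$, completing \eqref{eq:LHsetbd} on all of $\cR(\tfrac{1}{25})$.

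The chief technical obstacle is the reduction from $\gch$ to its inducing primitive character $\pch^*$, since the functional equation is available only in the primitive setting; fortunately the bridging Euler factors are both uniformly bounded and uniformly bounded away from $0$ on the critical line, so the Lindel\"of bound moves freely between the two. Verification of the polynomial growth needed for Phragm\'en--Lindel\"of is routine given classical convexity estimates.
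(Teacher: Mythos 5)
Your proposal is correct and follows essentially the same route as the paper: in the direction (ii)$\Rightarrow$(i), both arguments first apply Phragm\'en--Lindel\"of to the entire function $\tL(s,\gch)$ on $\{\sigma\ge\tfrac12\}$ using the Lindel\"of bound on $\sigma=\tfrac12$ and the trivial bound on $\sigma=2$, and then extend to $\sigma\in[\tfrac1{25},\tfrac12]$ via the functional equation for the inducing primitive character together with the bounded (and nonvanishing) Euler factors relating $L(s,\gch)$ to $L(s,\pch^*)$. The only cosmetic difference is that you invoke Lemma~\ref{lem:Xxexpansion} for the $\cM$-factor bound where the paper cites \cite[Cors.~10.5 and 10.10]{MontVau} directly, and you spell out the primitivity reduction a bit more explicitly than the paper's $\asymp_{\gchm}$ shorthand; the mathematics is the same.
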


\begin{remark*}
The number $\frac1{25}$ can be replaced by any positive absolute constant.
\end{remark*}

\begin{proof}
If $\tL(s,\gch)$ belongs to $\cV(\frac1{25})$, then applying
\eqref{eq:LHsetbd} with $\sigma\defeq\frac12$ and $\sigma_0\defeq \frac1{25}$,
we obtain \eqref{eq:LHbd2} at once. This shows that $(i)\Longrightarrow(ii)$.

Conversely, when $\LHgch$ is true, \eqref{eq:LHbd2} holds.
Also, $\tL(2+it,\gch)\ll 1$ holds unconditionally. Since $\tL(s,\gch)$ is entire,
the Phragmen-Lindel\"of theorem gives
\be\label{eq:PhrgLd}
\tL(s,\gch)\llsym{ \gchm }\tau^\eps\qquad(\sigma\ge\tfrac12).
\ee

Next, suppose that $\sigma\in[\frac1{25},\frac12]$.
Replacing $s$ by $1-s$ in \eqref{eq:PhrgLd}, we have
\be\label{eq:L(1-s)chis}
L(1-s,\gch)\llsym{ \gchm }\tau^\eps
\mand
L(1-s,\overline\gch)\llsym{ \gchm }\tau^\eps.
\ee
Moreover, we have
\be\label{eq:fnceqbd}
L(s,\gch)\llsym{ \gchm }\tau^{1/2-\sigma}L(1-s,\overline\gch).
\ee
Indeed, for a primitive character $\pch$ mod~$\pchm$, the bound
\[
L(s,\pch)\llsym{\pchm}\tau^{1/2-\sigma}|L(1-s,\opch)|
\]
follows from \cite[Cors.~10.5 and~10.10]{MontVau}. More generally,
if $\gch$ is induced from $\pch$, then
(with $s$ as above) we have
\[
L(s,\gch)\,\mathop{\asymp}\limits_{\gchm} \,L(s,\pch)
\mand
L(1-s,\ogch)\,\mathop{\asymp}\limits_{\gchm} \,L(1-s,\opch),
\]
and \eqref{eq:fnceqbd} follows.
Using \eqref{eq:cpt-est}, \eqref{eq:L(1-s)chis}, and
\eqref{eq:fnceqbd}, we get that
\[
\tL(s,\gch)=L(s,\gch)+O(1)
\llsym{ \gchm } \tau^{1/2-\sigma}\big|L(1-s,\overline\gch)\big|+O(1)
\llsym{ \gchm }\tau^{1/2-\sigma+\eps}\quad(\sigma\in[\tfrac1{25},\tfrac12]).
\]
Combining this with \eqref{eq:PhrgLd}, we see that
$\tL(s,\gch)\in\cV(\frac1{25})$, and so $(ii)\Longrightarrow(i)$.
\end{proof}

\begin{lemma}\label{lem:cicada-2}
If $\sigma_0>0$ and $F\in\cV(\sigma_0)$, then $F'\in\cV(\sigma_0+\frac1{25})$.
\end{lemma}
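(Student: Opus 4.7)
The plan is to apply Cauchy's integral formula on a small disk centered at $s$, choosing the radius to depend on the target $\eps$ so that the loss incurred is absorbed into the $\tau^\eps$ factor. Fix $\eps>0$ and a point $s$ with $\sigma=\Re(s)\ge\sigma_0+\tfrac{1}{25}$. Since $F\in\cV(\sigma_0)$ satisfies the pointwise bound \eqref{eq:LHsetbd} on all of $\cR(\sigma_0)$, $F$ is actually analytic throughout $\cR(\sigma_0)$ (any pole would contradict the bound). Consequently, for every $r\le\tfrac{1}{25}$, we have $r\le\sigma-\sigma_0$, so $F$ is analytic on the closed disk $\{w:|w-s|\le r\}$, and Cauchy's formula yields
\[
|F'(s)| \;\le\; \frac{1}{r}\,\max_{|w-s|=r}|F(w)|.
\]

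Next I would estimate $|F(w)|$ on the circle. For any $w$ with $|w-s|=r$, we have $\Re(w)\ge\sigma-r\ge\sigma_0$, so by hypothesis $|F(w)|\ll_\eta\tau(w)^{\lambda(\Re w)+\eta}$ for any $\eta>0$. Since $r<1$ and $\tau\ge 10$, we have $\tau(w)\asymp\tau$ with absolute implied constants. The function $\lambda(\sigma')=\max\{0,\tfrac12-\sigma'\}$ is Lipschitz with constant $1$, so $\lambda(\Re w)\le\lambda(\sigma-r)\le\lambda(\sigma)+r$. Combining these observations gives
\[
|F(w)|\;\ll_\eta\;\tau^{\lambda(\sigma)+r+\eta}.
\]

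To conclude, I would choose $\eta=\eps/2$ and $r=\min\{\eps/2,\tfrac{1}{25}\}$, so that $r+\eta\le\eps$ and the Cauchy estimate becomes $|F'(s)|\ll_\eps\tau^{\lambda(\sigma)+\eps}$, which is exactly the bound required for $F'\in\cV(\sigma_0+\tfrac{1}{25})$. The main subtlety, and the reason the proof is not completely immediate from Cauchy, is the choice of $r$: a naive application with a fixed radius of $\tfrac{1}{25}$ would produce an unacceptable factor $\tau^{1/25}$ in the exponent throughout the range $\sigma\in[\sigma_0+\tfrac{1}{25},\tfrac12]$, since $\lambda$ is strictly decreasing there. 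The resolution is to let $r$ shrink with $\eps$, which is legitimate because the implied constant in the definition of $\cV(\sigma_0+\tfrac{1}{25})$ is allowed to depend on $\eps$ (though not on $s$).
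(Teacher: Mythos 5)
Your proof is correct and takes essentially the same route as the paper: apply Cauchy's integral formula on a circle whose radius shrinks with $\eps$, so the $\tau^{O(r)}$ loss from moving the real part is absorbed into the $\tau^\eps$ allowance. The paper fixes the radius at $\eps/2$ for $\eps\in(0,\tfrac{2}{25})$ and leaves the analyticity of $F$ and the comparison $\tau(z)\asymp\tau(s)$ implicit; your version spells these out and covers all $\eps>0$ via $r=\min\{\eps/2,\tfrac1{25}\}$, but the argument is the same.
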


\begin{proof}
Let $s\in\cR(\sigma_0+\frac1{25})$.
For any $\eps\in(0,\frac2{25})$, let $\cC$ be the circle in the complex plane
with center $s$ and radius~$\frac\eps2$. Since $F\in\cV(\sigma_0)$
and each $z\in\cC$ satisfies
$\Re(z)\ge \sigma-\tfrac\eps2\ge\sigma_0$ (and thus $z\in\cR(\sigma_0)$), we have
\[
F(z)\llsym{F,\sigma_0}\tau^{\max\{0,1/2-\Re(z)\}+\eps/2}
\le\tau^{\lambda(\sigma)+\eps}.
\]
By the Cauchy integral formula,
\[
\big|F'(s)\big|=\bigg|\frac{1}{2\pi i}\oint_\cC\frac{F(z)\,dz}{(z-s)^2}\bigg|
\le\frac{2}{\eps}\max_{z\in\cC}\big|F(z)\big|\llsym{F,\sigma_0}
\tau^{\lambda(\sigma)+\eps},
\]
and the lemma follows.
\end{proof}

Combining Lemmas~\ref{lem:cicada-1} and~\ref{lem:cicada-2}, the next result is immediate.

\begin{corollary}\label{cor:Lbds-combined} Under $\LHgch$, we have
\begin{alignat}{3}
\nonumber
L(s,\gch)&=\delta_{\gch} c_{\gchm} (s-1)^{-1}+O_{ \gchm }(\tau^{\lambda(\sigma)+\eps})
\qquad&(s\in\cR(\tfrac1{25})),\\
\label{eq:sink2}
L'(s,\gch)&=-\delta_{\gch} c_{\gchm} (s-1)^{-2}+O_{ \gchm }(\tau^{\lambda(\sigma)+\eps})
\qquad&(s\in\cR(\tfrac2{25})),\\
\nonumber
L''(s,\gch)&=2\delta_{\gch} c_{\gchm} (s-1)^{-3}+O_{ \gchm }(\tau^{\lambda(\sigma)+\eps})
\qquad&(s\in\cR(\tfrac3{25})).
\end{alignat}
In particular, if $\rho=\beta+i\gamma$ is a nontrivial zero of $L(s,\gch)$, then
\begin{alignat}{3}
\label{eq:sink4}
L'(\rho,\gch)&\llsym{ \gchm }\tau^{\lambda(\beta)+\eps}
\qquad&(\rho\in\cR(\tfrac2{25})),\\
\label{eq:sink5}
L''(\rho,\gch)&\llsym{ \gchm }\tau^{\lambda(\beta)+\eps}
\qquad&(\rho\in\cR(\tfrac3{25})).
\end{alignat}
\end{corollary}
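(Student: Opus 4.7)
The plan is to combine the two preceding lemmas, propagating the Lindel\"of-type bound from $\tL(s,\gch)$ to its first two derivatives, and then translating these into bounds on $L(s,\gch)$ by way of the defining identity \eqref{eq:defn-fchis}.

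First, under $\LHgch$, Lemma~\ref{lem:cicada-1} gives $\tL(s,\gch)\in\cV(\tfrac{1}{25})$. A single application of Lemma~\ref{lem:cicada-2} then yields $\tL'(s,\gch)\in\cV(\tfrac{2}{25})$, and a second application yields $\tL''(s,\gch)\in\cV(\tfrac{3}{25})$; each differentiation costs $\tfrac{1}{25}$ in the real part, which is precisely the widening of the regions in the three displayed bounds. Next, I would rearrange \eqref{eq:defn-fchis} as $L(s,\gch)=\tL(s,\gch)+\delta_{\gch}c_{\gchm}(s-1)^{-1}$ and differentiate once and twice to obtain
\[
L'(s,\gch)=\tL'(s,\gch)-\delta_{\gch}c_{\gchm}(s-1)^{-2},\qquad L''(s,\gch)=\tL''(s,\gch)+2\delta_{\gch}c_{\gchm}(s-1)^{-3}.
\]
Reading each of these identities against the corresponding $\cV$-membership (that is, the bound \eqref{eq:LHsetbd}) immediately produces the three displayed estimates.

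For the ``In particular'' bounds at a nontrivial zero $\rho=\beta+i\gamma$, I would simply specialize $s=\rho$ in the just-proved estimates. If $\gch$ is nonprincipal then $\delta_{\gch}=0$ and the principal-part contributions vanish outright. If $\gch$ is principal mod~$\gchm$, the identity $L(s,\gch_{0,\gchm})=\zeta(s)g_{\gchm}(s)$ shows that the zeros of the factor $g_{\gchm}(s)=\prod_{p\mid\gchm}(1-p^{-s})$ lie on the line $\sigma=0$ and hence contribute nothing in $\cR(\tfrac{2}{25})$; any nontrivial zero there is therefore a nontrivial zero of $\zeta(s)$, which satisfies $|\gamma|\ge 14$, so that $|\rho-1|^{-k}=O(1)$ for $k=2,3$. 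This is cleanly absorbed into the $O_{\gchm}(\tau^{\lambda(\beta)+\eps})$ error.

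Since the corollary is flagged as ``immediate,'' there is no substantive obstacle here; the only point requiring a moment of thought is ensuring the principal-part contributions $(\rho-1)^{-k}$ remain harmless when evaluated at a nontrivial zero, which is handled by the last observation above.
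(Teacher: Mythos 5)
Your argument is correct and follows exactly the route the paper intends: Lemma~\ref{lem:cicada-1} places $\tL(\cdot,\gch)$ in $\cV(\tfrac1{25})$, two applications of Lemma~\ref{lem:cicada-2} push $\tL'$ and $\tL''$ into $\cV(\tfrac2{25})$ and $\cV(\tfrac3{25})$, and unwinding $\tL=L-\delta_\gch c_\gchm(s-1)^{-1}$ gives the three displayed asymptotics. Your observation that the ``in particular'' bounds require the principal-part terms $(\rho-1)^{-k}$ to be $O(1)$, which holds because for $\gch$ principal every nontrivial zero of $L(s,\gch)$ in $\cR(\tfrac2{25})$ is a zero of $\zeta(s)$ with $|\gamma|\ge 14$, is a genuine point the paper leaves implicit, and you handle it correctly.
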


\begin{lemma}\label{lem:cicada-4}
Assume $\RHsimgch$ and $\LHgch$.
For any nontrivial zero $\rho$ of $L(s,\gch)$, the function $f_\rho$ defined by
\[
f_\rho(s)\defeq\begin{cases}
\displaystyle\frac{L'(s,\gch)-L'(\rho,\gch)}{s-\rho}
&\quad\hbox{if $s\ne\rho$},\\
L''(\rho,\gch)&\quad\hbox{if $s=\rho$},
\end{cases}
\]
satisfies the bound
\be\label{eq:rose}
f_\rho(s)\llsym{ \gchm }\tau^{\lambda(\sigma)+\eps}+\delta_{\gch} c_{\gchm} |s-1|^{-2}\tau^\eps
\ee
uniformly for $s\in\cR(\tfrac4{25})$.
\end{lemma}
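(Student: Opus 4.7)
My plan is to first subtract off the pole of $L'(s,\gch)$ at $s=1$, and then bound the resulting entire piece by splitting into two regimes according to the size of $|s-\rho|$.

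The first step is to write $L'(s,\gch) = \tL'(s,\gch) - \delta_\gch c_\gchm(s-1)^{-2}$ (with $\tL'$ entire) and perform the algebraic manipulation
\[
f_\rho(s) = \tilde{f}_\rho(s) + \delta_\gch c_\gchm\cdot\frac{\rho+s-2}{(s-1)^2(\rho-1)^2}, \qquad \tilde{f}_\rho(s) := \frac{\tL'(s,\gch)-\tL'(\rho,\gch)}{s-\rho}.
\]
Under $\RHsimgch$, a simple zero $\rho$ has $\mathrm{Re}(\rho)=\tfrac12$, hence $|\rho-1|\ge\tfrac12$, and the polar term is bounded by $\ll_\gchm \delta_\gch c_\gchm(|s-1|^{-2}+|s-1|^{-1})$, which fits inside the allowed right-hand side $\delta_\gch c_\gchm|s-1|^{-2}\tau^\eps + \tau^{\lambda(\sigma)+\eps}$.

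To bound the entire function $\tilde{f}_\rho(s)$, I would distinguish two cases. In Case~1 ($|s-\rho|\ge\tfrac14$), the triangle inequality combined with Lemmas~\ref{lem:cicada-1}--\ref{lem:cicada-2} (placing $\tL'(\cdot,\gch)$ in $\cV(\tfrac{2}{25})$) and Corollary~\ref{cor:Lbds-combined} gives $|\tilde{f}_\rho(s)| \ll_\gchm \tau(s)^{\lambda(\sigma)+\eps}+\tau(\gamma)^{\eps}$ (using $\lambda(\tfrac12)=0$); the elementary estimate $\tau(\gamma)\le\tau(s)+|s-\rho|$ and the lower bound $|s-\rho|\ge\tfrac14$ then absorb $\tau(\gamma)^\eps/|s-\rho|$ into $\tau(s)^\eps$. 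In Case~2 ($|s-\rho|\le\tfrac14$), I would use the identity
\[
\tilde{f}_\rho(s) = \int_0^1 \tL''\bigl(\rho+u(s-\rho),\gch\bigr)\,du,
\]
which is valid since $\tL''$ is entire. Because $\mathrm{Re}(\rho)=\tfrac12$ and $\sigma\ge\tfrac{4}{25}$, the segment lies in $\cR(\tfrac{3}{25})$, where Lemma~\ref{lem:cicada-2} gives $\tL''(\cdot,\gch)\in\cV(\tfrac{3}{25})$. The monotonicity of $\lambda$ yields $\lambda(\mathrm{Re}(w))\le\lambda(\sigma)$ along the segment, and $\tau(w)\ll\tau(s)$ follows from $|\mathrm{Im}(w)|\le|t|+|s-\rho|$; these combine to give the integrand bound $\ll_\gchm\tau(s)^{\lambda(\sigma)+\eps}$.

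The main obstacle is the bookkeeping in Case~1: one must show that $\tau(\gamma)$ is effectively controlled by $\tau(s)$ after division by $|s-\rho|$, which rests on the inequality $|t-\gamma|\le|s-\rho|$. A separate point is that the assumption $\mathrm{Re}(\rho)=\tfrac12$ from $\RHsimgch$ is used essentially in Case~2 to keep the segment in $\cR(\tfrac{3}{25})$; if $\rho$ were non-simple with small real part, one would instead invoke $L'(\rho,\gch)=0$ to reduce $f_\rho(s)$ to $L'(s,\gch)/(s-\rho)$ and apply the Case~1 triangle-inequality argument uniformly.
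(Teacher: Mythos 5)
Your overall strategy is genuinely different from the paper's. The paper first records the single bound $L'(\rho,\gch)\ll_{\gchm}\tau^{\eps}$ (which holds whether $\rho$ is simple, via $\RHsimgch$ and \eqref{eq:sink4}, or non-simple, via $L'(\rho,\gch)=0$), and then bounds $f_\rho(s)$ by a Cauchy integral over a circle of radius $\asymp 1/\log\tau$ \emph{centered at $s$}. That construction never needs to know where $\rho$ actually sits: the circle lies inside $\cR(\frac{3}{25})$ because $s\in\cR(\frac{4}{25})$, and the only input about $\rho$ is the bound on $L'(\rho,\gch)$. Your route instead peels off the explicit polar part, leaving $\tilde f_\rho$, which you bound by a triangle inequality when $|s-\rho|$ is bounded below and by the mean-value identity $\tilde f_\rho(s)=\int_0^1 \tL''(\rho+u(s-\rho),\gch)\,du$ when $|s-\rho|$ is small. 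The algebra of your decomposition is correct, the absorption of the $\tau(\gamma)^{\eps}/|s-\rho|$ term is correct, and the simple-zero case works as you describe.

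The gap is in the non-simple case, and it is not fixed by your closing sentence. For a non-simple $\rho$ you reduce to $f_\rho(s)=L'(s,\gch)/(s-\rho)$, which is fine, but you then propose to ``apply the Case~1 triangle-inequality argument uniformly.'' The triangle inequality only gives $|f_\rho(s)|\ll|L'(s,\gch)|/|s-\rho|$, which is useless when $|s-\rho|$ is small; it needs a positive lower bound on $|s-\rho|$, exactly as in your Case~1. And you cannot fall back on your Case~2 segment argument as written, because that step used $\Re(\rho)=\tfrac12$ to keep the segment inside $\cR(\frac{3}{25})$, which is not available for a non-simple zero. The fix within your framework is to note that the two regimes can be made to cover everything: if $|s-\rho|<\frac{1}{25}$ then $\Re(\rho)\ge\sigma-|s-\rho|>\frac{3}{25}$, so the segment from $\rho$ to $s$ stays in $\cR(\frac{3}{25})$ and the integral of $\tL''$ (or just $L''$, using \eqref{eq:sink5}) applies; while if $|s-\rho|\ge\frac{1}{25}$, the triangle-inequality bound goes through. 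As currently phrased, the non-simple case is not actually covered, whereas the paper's Cauchy-integral-at-$s$ device handles simple and non-simple zeros in one stroke, which is precisely what it buys you over your decomposition.
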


\begin{proof}
We start with the fact that
\be\label{eq:wessex}
L'(\rho,\gch)\llsym{ \gchm }\tau^{\eps/2}
\ee
for any nontrivial zero $\rho$.
Indeed, if $\rho$ is non-simple, then we have $L'(\rho,\gch)=0$. On the other
hand, if $\rho=\beta+i\gamma$
is a simple (and nontrivial) zero, then $\beta=\frac12$ under $\RHsimgch$, whence
\eqref{eq:sink4} immediately implies \eqref{eq:wessex}.

Now, let $s\in\cR(\frac4{25})$, $s\ne 1$. We consider three different cases.

First, suppose $s=\rho$. Then $f_\rho(s)=L''(\rho,\gch)$, and
$\rho=s\in\cR(\frac4{25})$, hence \eqref{eq:rose}
follows directly from \eqref{eq:sink5}.

Next, suppose $0<|s-\rho|\le\frac1{50\log\tau}$, and write
\[
f_\rho(s)=\frac{1}{2\pi i}\oint_\cC\frac{f_\rho(z)}{(z-s)}\,dz
=\frac{1}{2\pi i}\oint_\cC\frac{L'(z,\gch)-L'(\rho,\gch)}{(z-\rho)(z-s)}\,dz
\]
where $\cC$ is the circle in the complex plane with center $s$
and radius $\frac1{25\log\tau}$, oriented counterclockwise. 
It is straightforward to check that
\[
\min\{|z-s|,|z-\rho|\}\ge\tfrac1{50\log\tau}\qquad(z\in\cC),
\]
and therefore
\[
f_\rho(s)\ll(\log\tau)
\Big\{\max\limits_{z\in\cC}\big|L'(z,\gch)\big|+\big|L'(\rho,\gch)\big|\Big\}
\llsym{ \gchm }(\log\tau)\max\limits_{z\in\cC}\big|L'(z,\gch)\big|
+\tau^\eps,
\]
where we used \eqref{eq:wessex} in the second step.
To prove \eqref{eq:rose} in this case, it is enough to show that
\be\label{eq:wessex2}
L'(z,\gch)\llsym{ \gchm }\tau^{\lambda(\sigma)+\eps/2}\qquad(z\in\cC).
\ee
Let $z=x+iy$ be a number in $\cC$.
Since $|x-\sigma|\le|z-s|=\frac1{25\log\tau}$, we have
$\tau^{\lambda(x)}\asymp\tau^{\lambda(\sigma)}$ and also $z\in\cR(\frac15)$;
hence \eqref{eq:wessex2} follows from \eqref{eq:sink2}.

Finally, suppose $|s-\rho|\ge\frac1{50\log\tau}$. In this case,
\[
f_\rho(s)\ll(\log\tau)\Big\{\big|L'(s,\gch)\big|+\big|L'(\rho,\gch)\big|\Big\}
\llsym{ \gchm }(\log\tau)\big|L'(s,\gch)\big|+\tau^\eps.
\]
Since $s\in\cR(\frac4{25})$, the required bound \eqref{eq:rose} follows from \eqref{eq:sink2}.
\end{proof}

\begin{lemma}\label{lem:cicada-5} Under $\RHsimgch$ and $\LHgch$, we have
\be\label{eq:DXsbd}
D(s,\gch)+\delta_{\gch}\frac{L'(s,\gch)}{s-1}
\llsym{ \gchm }\tau^{\lambda(\sigma)+\eps}+\delta_{\gch} |s-1|^{-2}\tau^\eps
+|\sigma-\tfrac12|^{-1}\tau^\eps
\ee
uniformly for $s\in\cR(\frac15)$.
\end{lemma}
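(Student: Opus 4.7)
My approach exploits the factorization
\[
D(s,\gch)+\delta_\gch\frac{L'(s,\gch)}{s-1}=L'(s,\gch)\cdot M(s,\gch),\qquad M(s,\gch):=\frac{L'(s,\gch)}{L(s,\gch)}+\frac{\delta_\gch}{s-1},
\]
which isolates the cancellation of the pole at $s=1$ in the principal case. The function $M(s,\gch)$ is meromorphic on $\cR(\tfrac{1}{5})$ with a simple pole of residue $m_\rho$ at each nontrivial zero $\rho$ of $L(s,\gch)$ (of multiplicity $m_\rho$), and no other singularities.

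I would next establish a Landau-type local partial-fraction expansion
\[
M(s,\gch)=\sum_{|\gamma-t|\le 1}\frac{m_\rho}{s-\rho}+R(s,\gch),
\]
where the sum runs over nontrivial zeros of $L(s,\gch)$ with $|\gamma-t|\le 1$ and $R(s,\gch)\llsym{\gchm}\log\tau$. This is obtained by a standard contour-shift, with $\LHgch$ (via Corollary~\ref{cor:Lbds-combined}) supplying the growth estimates on the shifted contour. Multiplying by $L'(s,\gch)$ and estimating $L'(s,\gch)\cdot R(s,\gch)\llsym{\gchm}(\tau^{\lambda(\sigma)+\eps}+\delta_\gch|s-1|^{-2}\tau^\eps)\log\tau$ via Corollary~\ref{cor:Lbds-combined}, the problem reduces to bounding each quotient $L'(s,\gch)/(s-\rho)$.

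For a simple zero $\rho=\tfrac{1}{2}+i\gamma$, which is the only possibility under $\RHsimgch$, I would apply Lemma~\ref{lem:cicada-4} to split
\[
\frac{L'(s,\gch)}{s-\rho}=\frac{L'(\rho,\gch)}{s-\rho}+f_\rho(s).
\]
The first summand is $\ll|L'(\rho,\gch)|/|\sigma-\tfrac12|\llsym{\gchm}\tau^\eps/|\sigma-\tfrac12|$, using \eqref{eq:sink4} (with $\lambda(\tfrac12)=0$) together with $|s-\rho|\ge|\sigma-\tfrac12|$, and the second is controlled by \eqref{eq:rose}. For a non-simple zero $\rho_0$ of multiplicity $m\ge 2$, the identity $L'(\rho_0,\gch)=0$ gives $L'(s,\gch)/(s-\rho_0)=f_{\rho_0}(s)$, and \eqref{eq:rose} furnishes the same bound. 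Since the count of zeros with $|\gamma-t|\le 1$ is $\ll\log(\gchm\tau)\ll\tau^\eps$, summing the contributions (and absorbing the logarithmic factor into $\eps$) yields \eqref{eq:DXsbd}.

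The main technical point is establishing the Landau-type expansion with an admissible remainder; the standard contour-shift argument adapts without difficulty because the relevant growth estimates on vertical lines are precisely those packaged in Corollary~\ref{cor:Lbds-combined}. The potential presence of non-simple zeros, which $\RHsimgch$ does not locate, is handled uniformly by the fact that $L'$ vanishes at them, collapsing each problematic quotient to the divided difference $f_{\rho_0}$ already controlled by Lemma~\ref{lem:cicada-4}.
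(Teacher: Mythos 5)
Your proof is correct and follows essentially the same route as the paper: factor $D(s,\gch)+\delta_\gch L'(s,\gch)/(s-1)$ as $L'(s,\gch)$ times a pole-free variant of $L'/L$, expand the latter as a sum over nearby zeros plus an $O(\log\tau)$ remainder, bound the remainder contribution via Corollary~\ref{cor:Lbds-combined}, and then handle the zero sum by splitting simple from non-simple zeros, using $\RHsimgch$ to give $|s-\rho|\ge|\sigma-\tfrac12|$ and \eqref{eq:sink4} for the residue terms, Lemma~\ref{lem:cicada-4} for the divided differences $f_\rho$, and the $\ll\log(\gchm\tau)$ zero count to conclude. The one inaccuracy worth flagging is your claim that the Landau-type local expansion of $M(s,\gch)$ is ``obtained by a standard contour-shift, with $\LHgch$ supplying the growth estimates.'' That expansion is in fact \emph{unconditional}: the paper cites Montgomery--Vaughan, Lemmas~12.1 and~12.6, which follow from the Hadamard factorization of the completed $L$-function together with Stirling's formula for the $\Gamma$-factor, and require no bound on $L$ along vertical lines. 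The hypothesis $\LHgch$ enters only afterward, through Corollary~\ref{cor:Lbds-combined}, exactly as you subsequently use it. This mislabelling of provenance does not affect the validity of your argument, but a reader attempting to carry out the contour-shift you describe would be pursuing the wrong technique.
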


\begin{proof} Suppose $\gch$ is induced from the primitive character $\pch$.
Unconditionally, we have (see, e.g., \cite[Lems.~12.1 and~12.6]{MontVau}):
\[
\frac{L'}{L}(s,\pch)=-\frac{\delta_{\pch}}{s-1}
+\ssum{\rho\\|\gamma-t|\le 1}\frac{1}{s-\rho}+O_{\pchm}(\log\tau)
\qquad(\sigma\in[-1,2]),
\]
where the sum runs over nontrivial zeros $\rho$ of $L(s,\pch)$.
Taking into account that
\[
\frac{L'}{L}(s,\gch)=\frac{L'}{L}(s,\pch)
+\sum_{p\,\mid\, \gchm }\frac{\pch(p)\log p}{p^s-\pch(p)},
\]
it follows that 
\[
\frac{L'}{L}(s,\gch)=-\frac{\delta_{\gch}}{s-1}
+\ssum{\rho\\|\gamma-t|\le 1}\frac{1}{s-\rho}+O_{\gchm} (\log\tau)
\qquad(\sigma\in[\tfrac15,2]).
\]
Multiplying by $L'(s,\gch)$, we get that
\be\label{eq:runningkids}
\tD(s,\gch)=\ssum{\rho\\|\gamma-t|\le 1}\frac{L'(s,\gch)}{s-\rho}
+O_{\gchm} (|L'(s,\gch)|\log\tau)\qquad(\sigma\in[\tfrac15,2]),
\ee
where $\tD(s,\gch)$ is the function defined on the left side of \eqref{eq:DXsbd}.
The error term in \eqref{eq:runningkids} is acceptable in view of \eqref{eq:sink2}.
To bound the sum in \eqref{eq:runningkids}, observe that
\[
\ssum{\rho\\|\gamma-t|\le 1}\frac{L'(s,\gch)}{s-\rho}
=\ssum{\rho~\text{\tt non-simple}\\|\gamma-t|\le 1}f_\rho(s)
+\ssum{\rho~\text{\tt simple}\\|\gamma-t|\le 1}\frac{L'(\rho,\gch)}{s-\rho}.
\]
As these sums all involve $\ll\log  \gchm \tau$ zeros, we obtain \eqref{eq:DXsbd} by
applying Lemma~\ref{lem:cicada-4} together with \eqref{eq:wessex}, taking into
account that $|s-\rho|\ge|\sigma-\frac12|$ for any simple zero $\rho$
(under $\RHsimgch$). This completes the proof.
\end{proof}

The next result, used in the proof of Theorem~\ref{thm:vonMangoldt-twist}
below, is conditional on $\LHgch$ but not on $\RHsimgch$.

\begin{lemma}\label{lem:trade}
Assume $\LHgch$.
For any $t\ge 2$, there is a real number $t_*\in[t,t+1]$ such that
\[
D(\sigma\pm it_*,\gch)\llsym{ \gchm }
\tau^{\lambda(\sigma)+\eps}\qquad(\sigma\in[\tfrac2{25},2]).
\]
\end{lemma}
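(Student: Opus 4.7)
The natural strategy is to write $D(s,\gch)=L'(s,\gch)\cdot(L'/L)(s,\gch)$ and bound the two factors separately at a height $t_*\in[t,t+1]$ chosen so that both horizontal lines $\Im s=\pm t_*$ stay a safe distance from the nontrivial zeros of $L(s,\gch)$. The factor $L'$ is controlled by $\LHgch$ via Corollary~\ref{cor:Lbds-combined}; the factor $L'/L$ is controlled by its classical partial-fraction expansion once $t_*$ is kept clear of the nearby zero ordinates.

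\textbf{Choice of $t_*$.} By the Riemann--von Mangoldt zero-counting formula, there are $\llsym{\gchm}\log\tau$ nontrivial zeros $\rho=\beta+i\gamma$ of $L(s,\gch)$ with $\gamma\in[t-2,t+2]\cup[-t-2,-t+2]$. Put $\delta\defeq c_{\gchm}/\log\tau$, with $c_{\gchm}>0$ chosen small enough that the union of open $\delta$-neighborhoods of all relevant $\gamma$ and $-\gamma$ has total measure $<1$ inside $[t,t+1]$. Pick any $t_*$ in the complement; then $|t_*-\gamma|\ge\delta$ for every zero with $\gamma\in[t-2,t+2]$, and $|t_*+\gamma|\ge\delta$ for every zero with $\gamma\in[-t-2,-t+2]$. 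This geometric input is uniform in $\sigma$.

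\textbf{Combining estimates.} For $s=\sigma\pm it_*$ with $\sigma\in[\tfrac{2}{25},2]$, the Montgomery--Vaughan expansion \cite[Lems.~12.1 and~12.6]{MontVau} for the primitive character inducing $\gch$, together with the finite Euler factor contributions at primes $p\mid\gchm$ (which are $O_{\gchm}(1)$ since $|p^s-\gch(p)|\ge p^{2/25}-1>0$), yields
\[
\frac{L'}{L}(s,\gch)=-\frac{\delta_\gch}{s-1}+\ssum{\rho\\|\gamma-\Im s|\le 1}\frac{1}{s-\rho}+O_{\gchm}(\log\tau).
\]
Since $|\Im s|=t_*\ge 2$, the first term is $O(1)$; by the choice of $t_*$, each summand satisfies $|s-\rho|\ge|\Im s-\gamma|\ge\delta$, and there are $\llsym{\gchm}\log\tau$ summands, giving $(L'/L)(s,\gch)\llsym{\gchm}\log^2\tau$. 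Meanwhile, Corollary~\ref{cor:Lbds-combined} under $\LHgch$ yields $L'(s,\gch)\llsym{\gchm}\tau^{\lambda(\sigma)+\eps/2}$ (the pole term $\delta_\gch c_{\gchm}/(s-1)^2$ being $O_{\gchm}(1)$ since $|s-1|\ge 1$). Multiplying produces
\[
D(s,\gch)=L'(s,\gch)\cdot(L'/L)(s,\gch)\llsym{\gchm}\tau^{\lambda(\sigma)+\eps/2}\log^2\tau\llsym{\gchm}\tau^{\lambda(\sigma)+\eps},
\]
as required. The only mildly delicate point is producing a single $t_*$ that works for both signs of $\pm it_*$, which is why the pigeonhole step excludes neighborhoods of zeros near $\pm t$ simultaneously; since the total number of such zeros is still $\llsym{\gchm}\log\tau$, nothing essential changes.
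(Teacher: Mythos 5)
Your proof is correct and takes essentially the same route as the paper: decompose $D(s,\gch)=L'(s,\gch)\cdot(L'/L)(s,\gch)$, bound $L'$ on the relevant lines via Corollary~\ref{cor:Lbds-combined} under $\LHgch$, and choose $t_*\in[t,t+1]$ so that $(L'/L)(\sigma\pm it_*,\gch)\ll_{\gchm}(\log\tau)^2$. The only difference is that the paper obtains this last bound by directly citing \cite[Lems.~12.2 and~12.7]{MontVau}, whereas you reprove that statement from the partial-fraction expansion \cite[Lems.~12.1 and~12.6]{MontVau} by a pigeonhole argument; both are fine, though you should rename your small constant to avoid clashing with the paper's $c_{\gchm}\defeq\phi(\gchm)/\gchm$.
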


\begin{proof}
By \cite[Lemmas 12.2 and 12.7]{MontVau}, there is a number $t_*\in[t,t+1]$ such that
\[
\frac{L'}{L}(\sigma\pm it_*,\gch)\ll(\log  \gchm  t)^2\qquad(\sigma\in[-1,2]).
\]
Multiplying by $L'(\sigma\pm it_*,\gch)$ and using \eqref{eq:sink2}, the result follows.
\end{proof}

\section{Criteria for $\RHsimgch$}

\medskip\begin{theorem}\label{thm:ultraclean}
Let $\gch$ be a  character mod~$ \gchm $ that
satisfies $\LHgch$. Then the following are equivalent:
\begin{itemize}
\item[$(i)$]  $\RHsimgch$ is true;
\item[$(ii)$] For any $X\ge 10$, we have
\be\label{eq:bird1}
\sum_{n\le X}\ell(n)\gch(n)=\delta_{\gch} P_{\gchm} (X)+O_{ \gchm }(X^{1/2+\eps});
\ee
\item[$(iii)$] For any $\euB\in C_c^\infty(\R^+)$ and $X\ge 10$, we have
\be\label{eq:bird2}
\sum_n\ell(n)\gch(n)\euB(n/X)
=\delta_{\gch}\int_0^\infty\euB(u/X)P'_{\gchm} (u)\,du+O_{ \gchm ,\euB}(X^{1/2+\eps}).
\ee
\end{itemize}
\end{theorem}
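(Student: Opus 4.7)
I will establish the cycle $(i)\Rightarrow(ii)\Rightarrow(iii)\Rightarrow(i)$. The content lies at the two endpoints; the middle implication is a routine partial-summation exercise. Throughout, $\LHgch$ enters via the bounds collected in Lemma~\ref{lem:cicada-5} and Lemma~\ref{lem:trade}, and I use the fact that the poles of $D(s,\gch)=L'(s,\gch)^2/L(s,\gch)$ in the critical strip are precisely the simple zeros of $L(s,\gch)$ (with residue $L'(\rho,\gch)\ne 0$), together with a pole of order three at $s=1$ in the principal case.

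For $(i)\Rightarrow(ii)$, apply truncated Perron's formula to $D(s,\gch)$ at $c\defeq 1+1/\log X$ and $T\defeq X$, using Lemma~\ref{lem:trade} to pick a good height $T_*\in[T,T+1]$. Shift the contour from $\sigma=c$ to $\sigma=\tfrac12+\eps$. Under $\RHsimgch$ the only residue crossed is at $s=1$, contributing $\delta_\gch P_\gchm(X)$ by~\eqref{eq:residue}. On the shifted vertical line, Lemma~\ref{lem:cicada-5} bounds $D(\tfrac12+\eps+it,\gch)$ by $O_\gchm(\tau^\eps)$ away from $s=1$, yielding a vertical contribution of $O_\gchm(X^{1/2+\eps})$; the horizontal segments and Perron truncation error $O(X^{1+\eps}/T)=O(X^\eps)$ are standard. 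For $(ii)\Rightarrow(iii)$, set $A(u)\defeq\sum_{n\le u}\ell(n)\gch(n)$ and apply Stieltjes integration by parts:
\[
\sum_n\ell(n)\gch(n)\euB(n/X)=-\frac{1}{X}\int_0^\infty A(u)\euB'(u/X)\,du.
\]
Writing $A(u)=\delta_\gch P_\gchm(u)+E(u)$ with $E(u)\ll_\gchm u^{1/2+\eps}$ and integrating once more by parts on the $P_\gchm$ piece recovers $\delta_\gch\int\euB(u/X)P'_\gchm(u)\,du$, while the $E$-contribution is bounded by $\ll_{\gchm,\euB}X^{1/2+\eps}$.

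For $(iii)\Rightarrow(i)$, I argue by contradiction in the spirit of Landau. Fix $\euB\in C_c^\infty(\R^+)$ with Mellin transform $\widetilde\euB(s)\defeq\int_0^\infty\euB(u)u^{s-1}\,du$ (entire and rapidly decaying in vertical strips). A change-of-variables computation gives, for $\sigma>1$,
\[
\int_0^\infty\Big(\sum_n\ell(n)\gch(n)\euB(n/X)\Big)X^{-s-1}\,dX=\widetilde\euB(s)\,D(s,\gch).
\]
Split the outer integral at $X=1$: the piece $\int_0^1$ is a finite linear combination of Mellin integrals of $\euB$ restricted to $[n,\infty)$, hence entire in $s$. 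On $\int_1^\infty$, insert $(iii)$: after the substitution $u=Xv$ the main term $\delta_\gch\int\euB(u/X)P'_\gchm(u)\,du$ works out to $X\cdot Q_\euB(\log X)$ for an explicit quadratic polynomial $Q_\euB$, whose Mellin transform on $[1,\infty)$ is a linear combination of $(s-1)^{-k}$ for $k\in\{1,2,3\}$; the error integral $\int_1^\infty E_\euB(X)X^{-s-1}\,dX$ converges absolutely for $\sigma>\tfrac12+\eps$. Hence $\widetilde\euB(s)D(s,\gch)$ admits meromorphic continuation to $\{\sigma>\tfrac12\}$ with its only possible singularity at $s=1$. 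Since $\widetilde\euB$ is entire and its zeros are isolated, I may choose $\euB$ with $\widetilde\euB(\rho)\ne 0$ at any prescribed $\rho$; this shows that $D(s,\gch)$ is holomorphic on $\{\sigma>\tfrac12,\,s\ne 1\}$, so $L(s,\gch)$ has no simple zero with $\beta>\tfrac12$. To rule out simple zeros with $0<\beta<\tfrac12$, pass to the primitive character $\pch$ inducing $\gch$ and apply the functional equation $L(s,\pch)=\cM_\pch(s)L(1-s,\opch)$: a simple zero of $L(s,\gch)$ at $\rho$ (equivalently, of $L(s,\pch)$ at $\rho$) produces a simple zero of $L(s,\opch)$ at $1-\rho$, with real part $>\tfrac12$. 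Since $\ell(n)\overline\gch(n)=\overline{\ell(n)\gch(n)}$, hypothesis $(iii)$ transfers from $\gch$ to $\overline\gch$, and $\LHgch$ likewise transfers to $\overline\gch$, so the preceding argument applied to $\overline\gch$ (whose critical-strip simple zeros coincide with those of $\opch$) yields the required contradiction.

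The main obstacle is the $(iii)\Rightarrow(i)$ step: one must carefully identify the triple pole at $s=1$ of $\widetilde\euB(s)D(s,\gch)$ with the residues produced by the Mellin transform of the polynomial-in-$\log X$ main term, and handle zeros $\rho$ with $0<\beta<\tfrac12$ via the functional equation on the primitive level (the spurious zeros on $\sigma=0$ introduced by non-primitive induction do not affect $\RHsimgch$, since $\beta=0$ is excluded). All remaining estimates flow from Lemma~\ref{lem:cicada-5} and the rapid decay of $\widetilde\euB$.
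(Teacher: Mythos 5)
Your proposal is correct and follows the paper's strategy closely: truncated Perron with a contour shift to just right of the critical line for $(i)\Rightarrow(ii)$, partial summation for $(ii)\Rightarrow(iii)$, and a Mellin--Landau analytic-continuation argument (absolute convergence of $\int (f_\euB-g_\euB)X^{-s-1}\,dX$ for $\sigma>\tfrac12+\eps$, then choosing $\euB$ with $\widehat\euB(\rho)\ne0$) for $(iii)\Rightarrow(i)$. The only substantive difference is that you explicitly rule out simple zeros with $0<\beta<\tfrac12$ by passing to $\ogch$ via the functional equation, whereas the paper leaves this implicit via the $\rho\leftrightarrow1-\overline\rho$ symmetry of nontrivial zeros; also note that in your $(iii)\Rightarrow(i)$ step the reference to Lemma~\ref{lem:cicada-5} is superfluous (and would be circular, as that lemma assumes $\RHsimgch$) --- the argument you actually write needs only the absolute convergence supplied by $(iii)$ itself.
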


\begin{proof} $(i)\Rightarrow(ii)$.
Let $T\defeq\sqrt{X}$, $\sigma_0\defeq 1+\frac1{\log X}$,
and $\sigma_1\defeq\frac12+\frac1{\log X}$.
Let $\cC$ be the rectangular contour in $\C$ consisting of the four
directed line segments:
\dalign{
\cC_1:\quad&\sigma_0-iT\longrightarrow\sigma_0+iT,\\
\cC_2:\quad&\sigma_0+iT\longrightarrow\sigma_1+iT,\\
\cC_3:\quad&\sigma_1+iT\longrightarrow\sigma_1-iT,\\
\cC_4:\quad&\sigma_1-iT\longrightarrow\sigma_0-iT.
}
Using Perron's formula (see, e.g., \cite[Thm.~5.2 and Cor.~5.3]{MontVau}),
it follows that
\[
\sum_{n\le X}\ell(n)\gch(n)=\frac{1}{2\pi i}\int_{\cC_1}
D(s,\gch)\,\frac{X^s}{s}\,ds+O(X^{1/2}(\log X)^3),
\]
where the implied constant is absolute.
Under $\RHsimgch$, the function $D(s,\gch)$ is analytic in
the half-plane $\{\sigma>\frac12\}$ unless $\gch=\mathbbm 1$, in which case
there is a triple pole at $s=1$. Using Cauchy's theorem and \eqref{eq:residue}, we see that
\[
\frac{1}{2\pi i}\bigg(\int_{\cC_1}+\int_{\cC_2}
+\int_{\cC_3}+\int_{\cC_4}\bigg)D(s,\gch)\,\frac{X^s}{s}\,ds
=\frac{1}{2\pi i}\oint_\cC D(s,\gch)\,\frac{X^s}{s}\,ds=\delta_{\gch} P_{\gchm} (X);
\]
consequently,
\[
\sum_{n\le X}\ell(n)\gch(n)=\delta_{\gch} P_{\gchm} (X)-
\frac{1}{2\pi i}\bigg(\int_{\cC_2}+\int_{\cC_3}
+\int_{\cC_4}\bigg)D(s,\gch)\,\frac{X^s}{s}\,ds+O_\eps(X^{1/2+\eps}).
\]
By Corollary~\ref{cor:Lbds-combined} (bound \eqref{eq:sink2}) and
Lemma~\ref{lem:cicada-5} (bound \eqref{eq:DXsbd}), the bound
\[
D(s,\gch)\llsym{ \gchm }\tau^{\eps/2}\log X
\]
holds uniformly for any $s$ on the segments $\cC_2$, $\cC_3$, and $\cC_4$; consequently,
\[
\int_{\cC_j}D(s,\gch)\,\frac{X^s}{s}\,ds
\llsym{ \gchm }\frac{\log X}{T^{1-\eps/2}}\int_{\sigma_1}^{\sigma_0}X^{\sigma}\,d\sigma
\ll X^{1/2+\eps}\qquad(j=2\text{~or~}4),
\]
and 
\[
\int_{\cC_3}D(s,\gch)\,\frac{X^s}{s}\,ds
\llsym{ \gchm }X^{1/2}\log X\int_{-T}^T\frac{\tau^{\eps/2}}{1+|t|}\,dt
\ll X^{1/2+\eps}.
\]
Putting everything together, we obtain \eqref{eq:bird1}.

\bigskip\noindent $(ii)\Rightarrow(i)$. Let
\be\label{eq:var-defns}
S(u)\defeq\sum_{n\le u}\ell(n)\gch(n)=\delta_{\gch} P_{\gchm} (u)+E(u),
\qquad\text{where}\quad
E(u)\llsym{ \gchm }u^{1/2+\eps}.
\ee
In the region $\{\sigma>1\}$ we have
\be\label{eq:spraybottle}
D(s,\gch)=\sum_{n=1}^\infty\frac{\ell(n)\gch(n)}{n^s}
=\delta_{\gch}\int_1^\infty u^{-s}\,P'_{\gchm} (u)\,du
+\int_{1^-}^\infty u^{-s}\,dE(u).
\ee
The first integral in \eqref{eq:spraybottle} evaluates to
\be\label{eq:Eschi-defn}
D_\infty(s,\gch)\defeq\frac{g_{\gchm} (1)}{(s-1)^3}-\frac{\gamma_0g_{\gchm} (1)}{(s-1)^2}+
\frac{(\gamma_0^2+3\gamma_1)g_{\gchm} (1)-\gamma_0 g_{\gchm} '(1)-\tfrac12g_{\gchm} ''(1)}{s-1},
\ee
which is the singular part of $D(s,\gch)$ at $s=1$ in the case that $\gch=\gch_{0, \gchm }$.
Integrating by parts, we see that the second integral in \eqref{eq:spraybottle}
is equal to
\[
\int_{1^-}^\infty u^{-s}\,dE(u)
=-P_{\gchm} (1)+s\int_1^\infty u^{-s-1}E(u)\,du.
\]
Since $E(u)\ll_{ \gchm }u^{1/2+\eps}$, the right side continues analytically
to the half-plane $\{\sigma>\frac12+\eps\}$; consequently, the function
\[
D(s,\gch)-\delta_{\gch} D_\infty(s,\gch)
\]
continues analytically to the same half-plane. In particular,
$L(s,\gch)$ has no simple zeros in that region.
Taking $\eps\to 0^+$, this verifies $\RHsimgch$.

\bigskip\noindent $(ii)\Rightarrow(iii)$. Using \eqref{eq:bird1}
and \eqref{eq:var-defns}, we have by partial summation:
\[
\sum_n\ell(n)\gch(n)\euB(n/X)
=\delta_{\gch}\int_0^\infty\euB(u/X)P'_{\gchm} (u)\,du+\int_0^\infty\euB(u/X)\,dE(u).
\]
Using integration by parts and the bound $E(u)\ll_{ \gchm }u^{1/2+\eps}$,
it is easily shown that the second integral is $\ll_{ \gchm ,\euB}X^{1/2+\eps}$,
hence we have \eqref{eq:bird2}.

\bigskip\noindent $(iii)\Rightarrow(i)$. For each $\euB\in C_c^\infty(\R^+)$,
we define
\[
\widehat\euB(s)=\int_0^\infty\euB(u)u^{s-1}\,du\qquad(s\in\C),
\]
and we fix a number $d_\euB<1$ such that $\euB(u)=0$ if $u\le d_\euB$ or $u\ge d_\euB^{-1}$.
Let
\[
f_\euB(x)\defeq\sum_n\ell(n)\gch(n)\euB(n/x),\qquad
g_\euB(x)\defeq\delta_{\gch}\int_{xd_\euB}^{x/d_\euB}\euB(u/x)P'_{\gchm} (u)\,du,
\]
and observe that $f_\euB(x)=0$ for all $x\le d_\euB$. By \eqref{eq:bird2}, we have
\be\label{eq:trim}
f_\euB(x)-g_\euB(x)\llsym{ \gchm ,\euB}x^{1/2+\eps}\qquad(x\ge 10).
\ee

In the half-plane $\{\sigma>1\}$, it is straightforward to show that
\dalign{
\int_{d_\euB}^\infty f_\euB(x)x^{-s-1}\,dx&=\widehat\euB(s)\cdot D(s,\gch),\\
\int_{d_\euB}^\infty g_\euB(x)x^{-s-1}\,dx
&=\widehat\euB(s)\cdot\delta_{\gch} \int_{d_\euB}^\infty u^{-s}P'_{\gchm} (u)\,du.
}
The latter integral can be split as $D_0(s,\gch)+D_\infty(s,\gch)$, where
\[
D_0(s,\gch)\defeq\int_{d_\euB}^1 u^{-s}P'_{\gchm} (u)\,du
\mand
D_\infty(s,\gch)\defeq\int_1^\infty u^{-s}P'_{\gchm} (u)\,du.
\]
The first integral defines $D_0(s,\gch)$ as an entire function in the complex plane.
For $\sigma>1$, $D_\infty(s,\gch)$ is explicitly given by \eqref{eq:Eschi-defn}. We see
that $D_\infty(s,\gch)$ analytically continues to the complex plane except
for a possible pole at $s=1$. As mentioned above, $D_\infty(s,\gch)$ is the singular part
of $D(s,\gch)$ in $s=1$ when $\gch=\gch_{0, \gchm }$. As $\widehat\euB(s)$
is entire, it follows that any pole of the function
\[
I(s,\gch)\defeq\int_{d_\euB}^\infty \big\{f_\euB(x)-g_\euB(x)\big\}x^{-s-1}\,dx
=\widehat\euB(s)\cdot\big\{D(s,\gch)-D_0(s,\gch)-D_\infty(s,\gch)\big\}
\]
with real part $\sigma>0$ must occur at a \emph{simple} zero $\rho$ of $L(s,\gch)$.
Moreover, if $\rho$ is such a zero, then choosing $\euB$ so that $\widehat\euB(\rho)\ne 0$,
we see that $I(s,\gch)$ does indeed have a pole at $s=\rho$.

On the other hand, in view of \eqref{eq:trim}, the integral defining $I(s,\gch)$
converges absolutely in the half-plane $\{\sigma>\frac12+\eps\}$, so $I(s,\gch)$
cannot have a pole there for any choice of $\euB$. Taking $\eps\to 0^+$, we deduce
that $L(s,\gch)$ has no simple zeros in $\{\sigma>\frac12\}$, and $\RHsimgch$
is verified.
\end{proof}

\section{Twisted sums with $\ell(n)$}

\begin{theorem}\label{thm:vonMangoldt-twist}
Let $\gch$ be a character mod~$ \gchm $ induced from a primitive character $\pch$
mod~$\pchm$ that satisfies $\LHpch$. For any $\xi\in\R^+$ and $T>0$, 
\[
\ssum{\rho=\beta+i\gamma\\\beta\ge\frac12\\0<\gamma\le T}\xi^{-\rho}
L'(\rho,\gch)\cM_{\overline\pch}(1-\rho)
-\frac{\tau(\opch)}{\pchm}\sum_{n\le \pchm T/(2\pi\xi)}\ell(n)\gch(n)\e(-n\xi)
\llsym{ \gchm ,\xi}T^{1/2+\eps}+1.
\]
\end{theorem}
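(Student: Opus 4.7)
The approach is to evaluate
\[
I := \frac{1}{2\pi i}\int_{c+i}^{c+iT_*}D(s,\gch)\,\cM_{\opch}(1-s)\,\xi^{-s}\,ds
\]
in two different ways and compare the results. Set $c := 1+(\log T)^{-1}$, and use Lemma~\ref{lem:trade} to pick $T_* \in [T, T+1]$ for which $D(\sigma \pm iT_*, \gch)\llsym{\gchm}\tau^{\lambda(\sigma)+\eps}$ holds uniformly on $\sigma \in [2/25, 2]$. For the first evaluation, expand the absolutely convergent Dirichlet series $D(s,\gch) = \sum_n \ell(n)\gch(n)n^{-s}$ on the line $\sigma = c$, interchange with the integral, and apply Lemma~\ref{lem:ConGhoGon} termwise with $v = n\xi$ and character $\opch$. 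This produces a main term equal (up to changing phase convention and an $O(1)$ trimming of the short initial range) to $(\tau(\opch)/\pchm)\sum_{n \le \pchm T/(2\pi\xi)} \ell(n)\gch(n)\e(-n\xi)$, together with an error $\sum_n \ell(n)\,E(\pchm, T_*, n\xi)$. Two pieces of $E$ sum routinely; the delicate stationary-phase contribution $T^{c+1/2}/(|T - 2\pi n\xi/\pchm| + T^{1/2})$, which concentrates near $n\approx \pchm T/(2\pi\xi)$, is controlled by dyadic decomposition in the distance $|T-2\pi n\xi/\pchm|$ using $\ell(n) \le (\log n)^2$, yielding a total contribution $\llsym{\gchm,\xi}T^{1/2+\eps}$.

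\textbf{Residue evaluation.} For the second evaluation, take $\cC$ to be the positively oriented rectangle with vertices $\sigma_0 + i,\ c + i,\ c + iT_*,\ \sigma_0 + iT_*$, where $\sigma_0 \in (2/25, 1/2)$ is fixed. Cauchy's theorem then gives $I = S_{\mathrm{res}} - I_{\mathrm{bot}} - I_{\mathrm{top}} - I_{\mathrm{left}}$, where $S_{\mathrm{res}}$ collects the residues of $\Phi(s) := D(s,\gch)\cM_{\opch}(1-s)\xi^{-s}$ at the poles inside $\cC$. These poles are exactly the simple zeros $\rho = \beta+i\gamma$ of $L(s,\gch)$ with $\sigma_0 < \beta < c$ and $1 < \gamma < T_*$ (at non-simple zeros $D$ is analytic and $L'(\rho,\gch)=0$); each residue equals $\xi^{-\rho}L'(\rho,\gch)\cM_{\opch}(1-\rho)$. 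Passing from this residue range to the theorem's range $\beta \geq 1/2,\ 0 < \gamma \leq T$ leaves $O(\log T)$ boundary zeros in the three thin strips $0 < \gamma \leq 1$, $T < \gamma \leq T_*$, and $\sigma_0 < \beta < 1/2$, each contributing $\llsym{\gchm,\xi}\tau^{\eps}$ via Corollary~\ref{cor:Lbds-combined} and the asymptotic $|\cM_{\opch}(1-\rho)|\asymp \pchm^{\beta-1/2}|\gamma|^{\beta-1/2}$ from Lemma~\ref{lem:Xxexpansion}; their total contribution is $\llsym{\gchm,\xi}T^{1/2+\eps} + 1$. The horizontal integrals $I_{\mathrm{top}}$ and $I_{\mathrm{bot}}$ are estimated by combining the pointwise bound on $D(\sigma \pm iT_*,\gch)$ from Lemma~\ref{lem:trade} with $|\cM_{\opch}(1-s)| \ll \pchm^{\sigma-1/2}T_*^{\sigma-1/2}$, integrated in $\sigma$ across $[\sigma_0, c]$; this yields $\llsym{\gchm,\xi}T^{1/2+\eps}$.

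\textbf{Main obstacle.} The crucial step will be bounding the left vertical integral $I_{\mathrm{left}}$. Because $\RHsimpch$ is not among the hypotheses of Theorem~\ref{thm:vonMangoldt-twist}, Lemma~\ref{lem:cicada-5} is unavailable, and no pointwise control of $D(s,\gch) = L'(s,\gch)^2/L(s,\gch)$ on $\sigma = \sigma_0$ is directly accessible, since zeros of $L(s,\gch)$ may lie arbitrarily near this line. I plan to bypass this obstruction by substituting the truncated partial-fraction expansion
\[
\frac{L'}{L}(s,\gch) = -\frac{\delta_{\gch}}{s-1} + \ssum{\rho'\\|\gamma' - t|\leq 1}\frac{1}{s-\rho'} + O_{\gchm}(\log\tau)
\]
into $D(s,\gch) = L'(s,\gch)\cdot (L'/L)(s,\gch)$ and evaluating $I_{\mathrm{left}}$ term by term. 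Each $\rho'$-piece reduces to an integral $\int L'(s,\gch)\cM_{\opch}(1-s)\xi^{-s}(s-\rho')^{-1}\,ds$, which is bounded via the Lindel\"of estimate $L'(s,\gch)\llsym{\gchm}\tau^{\lambda(\sigma)+\eps}$ of Corollary~\ref{cor:Lbds-combined} and the explicit decay of $\cM_{\opch}(1-s)$ from Lemma~\ref{lem:Xxexpansion}; summing over the $O(\log T)$ zeros per unit window in $t$, and separately handling the pole at $s=1$ when $\gch$ is principal, produces $I_{\mathrm{left}}\llsym{\gchm,\xi}T^{1/2+\eps}$. Equating the two evaluations of $I$ and combining all estimates delivers the theorem.
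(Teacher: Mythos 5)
Your overall architecture---truncated Perron on the line $\sigma=c$, termwise application of Lemma~\ref{lem:ConGhoGon}, Cauchy's theorem on a rectangle to recover the residue sum, and Lemma~\ref{lem:trade} to pick a good ordinate $T_*$---matches the paper's proof. But the way you place the left edge of the rectangle creates a gap that the paper is careful to avoid, and it is not a detail one can patch after the fact.

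You take the left vertical segment at a \emph{fixed} abscissa $\sigma_0\in(\tfrac{2}{25},\tfrac12)$, and you then assert that the residue range ``$\sigma_0<\beta<c$, $1<\gamma<T_*$'' differs from the target range ``$\beta\ge\tfrac12$, $0<\gamma\le T$'' only by ``$O(\log T)$ boundary zeros,'' including those in the strip $\sigma_0<\beta<\tfrac12$. This count is wrong. Without any form of RH for $\pch$ (and the theorem is deliberately stated under $\LHpch$ alone), nothing prevents a positive proportion of all nontrivial zeros from lying at heights $\gamma\le T$ with $\sigma_0<\beta<\tfrac12$; the functional-equation symmetry and the unconditional zero-count both allow up to $\asymp T\log T$ such zeros. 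Each contributes a residue $\xi^{-\rho}L'(\rho,\gch)\cM_{\opch}(1-\rho)\ll_{\gchm,\xi}T^{\eps}$ (the $\tau^{1/2-\beta+\eps}$ from \eqref{eq:sink4} is offset by the $\tau^{\beta-1/2}$ from Lemma~\ref{lem:Xxexpansion}), so a naive bound on the discarded residues is $T^{1+\eps}$, fatally larger than $T^{1/2+\eps}$. The paper sidesteps this entirely by choosing the left abscissa $b\in(\tfrac12-\tfrac{1}{\log\gchm T},\tfrac12)$ so that $L(s,\gch)\neq 0$ for $\sigma\in[b,\tfrac12)$, $t\in[t_\circ,T]$ (possible because only finitely many zeros lie in that bounded box, and their real parts cannot accumulate at $\tfrac12$ from below). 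With that choice the rectangle contains \emph{exactly} the zeros with $\beta\ge\tfrac12$, $t_\circ<\gamma<T$, so there is nothing to discard, and the residue sum is identically $\Sigma_1(T)-\Sigma_1(t_\circ)$.

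The same problem infects your treatment of $I_{\mathrm{left}}$. At a fixed $\sigma_0<\tfrac12$ there may be zeros of $L(s,\gch)$ on, or arbitrarily close to, the line $\sigma=\sigma_0$, so $D(s,\gch)$ is not even guaranteed to be bounded (or defined) there; your partial-fraction substitution does not remove this, because the individual terms $(s-\rho')^{-1}$ carry the same singularities. Moreover, even granting that the zero distances are harmless, the pointwise estimate you invoke gives $|L'(\sigma_0+it,\gch)\,\cM_{\opch}(1-\sigma_0-it)|\ll_{\gchm}\tau^{\eps}$ with no genuine decay, and integrating the $O(\log\tau)$ tail of the partial-fraction expansion over a segment of length $\asymp T$ already yields a contribution $\gg T^{1+\eps}$, not the $T^{1/2+\eps}$ you claim. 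You would need to keep the left edge within $O(1/\log\gchm T)$ of the critical line (as the paper does) for the various residue and boundary contributions to stay within the budget, and in addition the paper moves the bottom edge to $t_\circ\in[2,3]$ chosen via Lemma~\ref{lem:trade} rather than $t=1$. In short, the fixed-$\sigma_0$ contour is not merely an inconvenience to be repaired by a partial-fraction trick; the choice of $b$ near $\tfrac12$ with the zero-free-strip condition is the load-bearing idea that your proposal omits.
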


\begin{proof}
The result is trivial for $T<100$, so we assume $T\ge 100$ in what follows.
For any $u>0$, let
\[
\Sigma_1(u)\defeq\ssum{\rho=\beta+i\gamma\\\beta\ge\frac12\\0<\gamma\le u}
\xi^{-\rho}L'(\rho,\gch)\cM_{\opch}(1-\rho),\qquad
\Sigma_2(u)\defeq\frac{\tau(\opch)}{\pchm}
\hskip-5pt\ssum{n\le \pchm u/(2\pi\xi)}
\hskip-5pt\ell(n)\gch(n)\e(-n\xi).
\]
In this notation, the theorem (for $T\ge 100$) asserts that
\be\label{eq:Sigsum}
\Sigma_1(T)-\Sigma_2(T)
\llsym{ \gchm ,\xi}T^{1/2+\eps}.
\ee
In fact, to prove \eqref{eq:Sigsum} for any $T\ge 100$, it suffices to prove that
\be\label{eq:Sigsumstar}
\Sigma_1(T_*)-\Sigma_2(T_*)
\llsym{ \gchm ,\xi}T^{1/2+\eps}
\ee
holds for \emph{at least one} number $T_*\in[T,T+1]$. Indeed,
by Lemma~\ref{lem:Xxexpansion}, we have
$\cM_{\opch}(1-\rho)\ll (\pchm\gamma)^{1/2}$
uniformly for all nontrivial zeros $\rho=\beta+i\gamma$ of $L(s,\gch)$
such that $\gamma\ge 10$ (say). Consequently,
\[
\big|\Sigma_1(T_*)-\Sigma_1(T)\big|\le
\ssum{\rho=\beta+i\gamma\\\beta\ge\frac12\\T<\gamma\le T_*}
\big|\xi^{-\rho}\cM_{\opch}(1-\rho)\big|
\llsym{ \gchm ,\xi}T^{1/2}\log T
\]
since there at most $O(\log  \gchm  T)$ zeros with $T<\gamma\le T_*$.
Furthermore,
\[
\big|\Sigma_2(T_*)-\Sigma_2(T)\big|\le\frac{|\tau(\opch)|}{\pchm}
\ssum{\pchm T/(2\pi\xi)<n\le \pchm T_*/(2\pi\xi)}\big|\ell(n)\gch(n)\e(-n\xi/\pchm)\big|
\llsym{\pchm,\xi}(\log T)^2
\]
since $0\le\ell(n)\le(\log n)^2$.
These bounds make it clear that \eqref{eq:Sigsum}
and \eqref{eq:Sigsumstar} are equivalent, and the claim is proved.

By the preceding argument, and recalling Lemma~\ref{lem:trade},
for the proof of \eqref{eq:Sigsum} we can assume without loss of generality that
\be\label{eq:horiz-up}
D(\sigma\pm iT,\gch)\llsym{ \gchm }
T^{\lambda(\sigma)+\eps}\qquad(\sigma\in[\tfrac2{25},2]).
\ee
Moreover, by Lemma~\ref{lem:trade}, there is a number
$t_\circ\in[2,3]$ such that
\[
D(\sigma\pm it_\circ,\gch)\llsym{ \gchm }1\qquad(\sigma\in[\tfrac2{25},2]).
\]
For such $t_\circ$, it can be shown that
\be\label{eq:gosh1}
\Sigma_1(t_\circ)\llsym{ \gchm ,\xi}1
\mand
\Sigma_2(t_\circ)\llsym{ \gchm ,\xi}1;
\ee
see, e.g., the proof of \cite[Thm.~3.1]{Banks3}.
We fix $t_\circ$ and $T$ with these properties.
Put $c\defeq 1+\tfrac{1}{\log  \gchm  T}$, and let $b$ be any number
in the open interval $(\tfrac12-\tfrac{1}{\log  \gchm  T},\frac12)$
such that $L(s,\gch)\ne 0$ for $\sigma\in[b,\frac12)$ and $t\in[t_\circ,T]$.
Finally, let $\cC$ be the rectangular contour consisting of the four
directed line segments:
\dalign{
\cC_1:\quad&c+it_\circ\longrightarrow c+iT,\\
\cC_2:\quad&c+iT\longrightarrow b+iT,\\
\cC_3:\quad&b+iT\longrightarrow b+it_\circ,\\
\cC_4:\quad&b+it_\circ\longrightarrow c+it_\circ.
}
Our choices of $T$, $t_\circ$, $b$, and $c$ guarantee that
$D(s,\gch)$ has no singularity on the contour $\cC$.
By Cauchy's theorem, we have 
\dalign{
\Sigma_1(T)-\Sigma_1(t_\circ)
&=\frac{1}{2\pi i}\oint_{\cC}D(s,\gch)\,\xi^{-s}\cM_{\opch}(1-s)\,ds\\
&=\frac{1}{2\pi i}\bigg(\int_{\cC_1}+\int_{\cC_2}
+\int_{\cC_3}+\int_{\cC_4}\bigg)
D(s,\gch)\,\xi^{-s}\cM_{\opch}(1-s)\,ds\\
&=I_1+I_2+I_3+I_4\quad\text{(say)},
}
and thus by \eqref{eq:gosh1}, we have
\be\label{eq:Ibegin}
\Sigma_1(T)=I_1+I_2+I_3+I_4+O_{\gchm} (1).
\ee
We estimate the four integrals $I_j$ separately.

First, recalling the Dirichlet expansion
\[
D(s,\gch)\defeq\frac{L'(s,\gch)^2}{L(s,\gch)}=\sum_{n\in\N}\frac{\ell(n)\gch(n)}{n^s}
\qquad(\sigma>1),
\]
it is immediate that
\[
I_1=\sum_{n\in\N}\ell(n)\gch(n)\cdot\frac{1}{2\pi i}\int_{c+it_\circ}^{c+iT}
(n\xi)^{-s}\cM_{\opch}(1-s)\,ds.
\]
Applying Lemma~\ref{lem:ConGhoGon} with both $T$ and $t_\circ$, we derive the estimate
\dalign{
I_1&=\frac{\tau(\opch)}{\pchm}
\sum_{\pchm t_\circ/(2\pi\xi)<n\le \pchm T/(2\pi\xi)}\ell(n)\gch(n)\e(-n\xi/\pchm)
+O\bigg(\sum_n\ell(n)\big|E(\pchm,T,n\xi)\big|\bigg)\\
&=\Sigma_2(T)-\Sigma_2(t_\circ)
+O_{ \gchm ,\xi}\big(T^{c-1/2}(E_1+E_2)\big),
}
where
\[
E_1\defeq\sum_n\frac{\ell(n)}{n^c}
\mand
E_2\defeq\sum_n\frac{\ell(n)}{n^c}
\frac{T}{|T-2\pi n\xi/\pchm|+T^{1/2}}.
\]
Clearly,
\be\label{eq:I1 E1bd}
E_1=\frac{\zeta'(c)^2}{\zeta(c)}\ll \frac{1}{(c-1)^3}=(\log T)^3.
\ee
Also, setting $T_\circ\defeq T/(2\pi\xi)$, we have
\be\label{eq:I1 E2bd}
E_2\llsym{\xi}T^{3/2}\sum_n\frac{\ell(n)}{n^c}
\frac{1}{|n-T_\circ|+T^{1/2}}.
\ee
Modifying slightly the proof of \cite[Thm.~3.1]{Banks3}, we find that
the sum in \eqref{eq:I1 E2bd} is $\ll T^{-1}(\log T)^3$. Therefore,
using \eqref{eq:I1 E1bd} and \eqref{eq:I1 E2bd} along with \eqref{eq:gosh1},
we see that
\[
I_1=\Sigma_2(T)+O_{ \gchm ,\xi}(T^{1/2+\eps}).
\]

Next, by \eqref{eq:horiz-up} and Lemma~\ref{lem:Xxexpansion}, we have
\[
D(\sigma+it,\gch)\llsym{ \gchm }\tau^\eps
\mand
\cM_{\opch}(1-\sigma-it)\llsym{\pchm}\tau^{\sigma-1/2}
\qquad(\sigma\in[b,c],~t\ge 1),
\]
from which we derive that
\[
I_2=\frac{1}{2\pi i}\int_{c+iT}^{b+iT}D(s,\gch)\,\xi^{-s}\cM_{\opch}(1-s)\,ds
\llsym{ \gchm ,\xi}T^{1/2+\eps}.
\]
Similarly, we have
\[
I_3\llsym{ \gchm ,\xi}T^\eps
\mand
I_4\llsym{ \gchm ,\xi}1.
\]

Combining \eqref{eq:Ibegin} with the above estimates for the integrals $I_j$,
we obtain \eqref{eq:Sigsum}, finishing the proof.
\end{proof}

\begin{corollary}\label{cor:vonMangoldt-twist2}
Let $\gch$ be a character mod~$ \gchm $ induced from a primitive character $\pch$
mod~$\pchm$ satisfying $\LHpch$. For any $\xi\in\R^+$, $\euB\in C_c^\infty(\R^+)$, and
$X\ge 10$, 
\dalign{
&\ssum{\rho=\beta+i\gamma\\\beta\ge\frac12,\,\gamma>0}\xi^{-\rho}L'(\rho,\gch)
\cM_{\opch}(1-\rho)\euB\Big(\frac{\gamma}{2\pi\xi X}\Big)\\
&\qquad\qquad-\frac{\tau(\opch)}{\pchm}
\sum_n\ell(n)\gch(n)\e(-n\xi/\pchm)\euB(n/(\pchm X))\llsym{ \gchm ,\xi,\euB}X^{1/2+\eps}.
}
\end{corollary}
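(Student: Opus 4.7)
The plan is to upgrade the sharp-cutoff estimate of Theorem~\ref{thm:vonMangoldt-twist} to the smooth-weighted bound via Riemann--Stieltjes integration by parts. I define $\Sigma_1(T)$ and $\Sigma_2(T)$ exactly as in the proof of Theorem~\ref{thm:vonMangoldt-twist}, and set $\Delta(T)\defeq\Sigma_1(T)-\Sigma_2(T)$; that theorem supplies the uniform estimate
\[
\Delta(T)\llsym{\gchm,\xi}T^{1/2+\eps}+1\qquad(T>0).
\]

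Next, I rewrite both sums in the corollary as Stieltjes integrals against the step functions $\Sigma_1$ and $\Sigma_2$. The zero sum equals $\int_0^\infty\euB(T/(2\pi\xi X))\,d\Sigma_1(T)$, because each positive ordinate $\gamma$ produces a jump in $\Sigma_1$ at $T=\gamma$ weighted precisely by $\euB(\gamma/(2\pi\xi X))$. Similarly, $\Sigma_2(T)$ jumps at each $T=2\pi\xi n/\pchm$, and at such a $T$ one has $T/(2\pi\xi X)=n/(\pchm X)$, so the arithmetic sum equals $\int_0^\infty\euB(T/(2\pi\xi X))\,d\Sigma_2(T)$. Consequently, the quantity to be bounded is
\[
J\defeq\int_0^\infty\euB\!\left(\frac{T}{2\pi\xi X}\right)d\Delta(T).
\]

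Since $\euB\in C_c^\infty(\R^+)$ vanishes in neighborhoods of both $0$ and $\infty$, the boundary contributions in Stieltjes integration by parts vanish, yielding
\[
J=-\frac{1}{2\pi\xi X}\int_0^\infty\Delta(T)\,\euB'\!\left(\frac{T}{2\pi\xi X}\right)dT.
\]
Choosing $d_\euB\in(0,1)$ with $\euB(u)=0$ outside $[d_\euB,d_\euB^{-1}]$, the integrand is supported on $T\in[2\pi\xi Xd_\euB,\,2\pi\xi X/d_\euB]$, an interval of length $O_{\xi,\euB}(X)$ on which the displayed estimate gives $\Delta(T)\llsym{\gchm,\xi,\euB}X^{1/2+\eps}$. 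Combining this with $|\euB'|\le\|\euB'\|_\infty$ and the prefactor $(2\pi\xi X)^{-1}$ delivers $J\llsym{\gchm,\xi,\euB}X^{1/2+\eps}$, the asserted bound. No genuine obstacle arises: the argument is a standard smoothing, and the only small point to verify is the identification of the two sums as Stieltjes integrals against $\Sigma_1$ and $\Sigma_2$, which is immediate from their jump structure.
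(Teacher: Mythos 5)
Your proof is correct and follows essentially the same route as the paper: both express the two sums in the corollary as Riemann--Stieltjes integrals against the step functions $\Sigma_1,\Sigma_2$ from Theorem~\ref{thm:vonMangoldt-twist}, integrate by parts against the compactly supported weight $\euB$, and then feed in the sharp-cutoff bound $\Sigma_1(T)-\Sigma_2(T)\ll_{\gchm,\xi}T^{1/2+\eps}+1$. The only difference is cosmetic --- the paper first rescales the integration variable (introducing an auxiliary $\Sigma_3$) before integrating by parts, whereas you integrate by parts directly and restrict to the support interval $T\in[2\pi\xi Xd_\euB,\,2\pi\xi X/d_\euB]$; your observation that the jump of $\Sigma_2$ at $T=2\pi\xi n/\pchm$ carries the weight $\euB(n/(\pchm X))$ is exactly the bookkeeping the paper's $\Sigma_3$ device accomplishes.
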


\begin{proof}
As in the proof of Theorem~\ref{thm:vonMangoldt-twist}, we define for $u>0$:
\[
\Sigma_1(u)\defeq\ssum{\rho=\beta+i\gamma\\\beta\ge\frac12\\0<\gamma\le u}
\xi^{-\rho}L'(\rho,\gch)\cM_{\opch}(1-\rho),\qquad
\Sigma_2(u)\defeq\frac{\tau(\opch)}{\pchm}
\hskip-5pt\ssum{n\le \pchm u/(2\pi\xi)}
\hskip-5pt\ell(n)\gch(n)\e(-n\xi).
\]
By Theorem~\ref{thm:vonMangoldt-twist}
with $T\defeq 2\pi\xi Xu$, we have
\be\label{eq:lawnmower}
\Sigma_1(2\pi\xi Xu)-\Sigma_2(2\pi\xi Xu)
\llsym{ \gchm ,\xi}(Xu)^{1/2+\eps}+1\qquad(u>0).
\ee
Next, we denote
\dalign{
\Sigma_3(u)&\defeq\Sigma_2(2\pi\xi u/\pchm)
=\frac{\tau(\opch)}{ \gchm }\ssum{n\le u}\ell(n)\gch(n)\e(-n\xi),\\
\Sigma_4(X)&\defeq\ssum{\rho=\beta+i\gamma\\\beta\ge\frac12,\,\gamma>0}\xi^{-\rho}L'(\rho,\gch)
\cM_{\opch}(1-\rho)\euB\Big(\frac{\gamma}{2\pi\xi X}\Big),\\
\Sigma_5(X)&\defeq\frac{\tau(\opch)}{\pchm}
\sum_n\ell(n)\gch(n)\e(-n\xi/\pchm)\euB(n/(\pchm X)).
}
Using Riemann-Stieltjes integration, we have
\dalign{
\Sigma_4(X)&=\int_0^\infty\euB\Big(\frac{u}{2\pi\xi X}\Big)\,d\Sigma_1(u)
=\int_0^\infty\euB(u)\,d\Sigma_1(2\pi\xi Xu)\\
&=-\int_0^\infty\euB'(u)\Sigma_1(2\pi\xi Xu)\,du,
}
and
\dalign{
\Sigma_5(X)&=\int_0^\infty\euB(u/(\pchm X))\,d\Sigma_3(u)
=\int_0^\infty\euB(u)\,d\Sigma_3(\pchm Xu)\\
&=-\int_0^\infty\euB'(u)\Sigma_3(\pchm Xu)\,du
=-\int_0^\infty\euB'(u)\Sigma_2(2\pi\xi Xu)\,du.
}
Hence, using \eqref{eq:lawnmower}, we get that
\[
\Sigma_4(X)-\Sigma_5(X)\llsym{ \gchm ,\xi}\int_0^\infty\big|\euB'(u)\big|
\big((Xu)^{1/2+\eps}+1\big)\,du\llsym{\euB}X^{1/2+\eps},
\]
which finishes the proof.
\end{proof}

\section{Computation under $\RHsimstar$ and $\LHstar$}

\begin{lemma}\label{lem:aloevera}
Assume $\RHsimstar$ and $\LHstar$.
Let $\pch$ be a primitive character mod~$\pchm$. Let $\xi=h/k$
be a rational number with $h,k>0$ and $(h,k)=1$. Put $ \gchm \defeq\pchm k$,
and let $\chi\defeq\pch\cdot\chi_{0, \gchm }$ be the character mod~$ \gchm $ 
induced from $\pch$. For all $\euB\in C_c^\infty(\R^+)$ and $X\ge 10$, we have
\be
\label{eq:meowing}
\frac{\tau(\opch)}{\pchm}
\sum_{n}\ell(n)\gch(n)\e(-n\xi/\pchm)\euB(n/(\pchm X))
=C_{\pch,\xi}\cdot F_{\pch,\xi}(X)+O_{ \gchm ,\xi,\euB}(X^{1/2+\eps}),
\ee
where $ \gchm \defeq \pchm k$, $\gch$ is the character mod~$ \gchm $ induced from $\pch$,
$C_{\pch,\xi}$ is given by \eqref{eq:CXxi-defn},
and $F_{\pch,\xi}(X)$ is given by \eqref{eq:FXxiX-defn}.
\end{lemma}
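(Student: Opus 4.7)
The strategy is to convert the additive twist $\e(-n\xi/\pchm)$ into a combination of multiplicative twists by Dirichlet characters mod~$q \defeq \pchm k$ and then invoke Theorem~\ref{thm:ultraclean} on each twisted sum. Since $\xi/\pchm = h/q$, I would first expand on $(\Z/q\Z)^\ast$ via finite Fourier inversion:
\[
\e(-nh/q) = \sum_{\psi \bmod q} c_\psi\,\psi(n),\qquad c_\psi \defeq \frac{1}{\phi(q)}\ssum{m \bmod q\\ (m,q)=1} \overline\psi(m)\,\e(-mh/q).
\]
Since $\gch$ vanishes outside $(\Z/q\Z)^\ast$, the sum appearing in \eqref{eq:meowing} (without the prefactor $\tau(\opch)/\pchm$) becomes
\[
\sum_{\psi \bmod q} c_\psi \sum_n \ell(n)(\gch\psi)(n)\,\euB(n/(\pchm X)).
\]

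Each product $\gch\psi$ is a character mod~$q$, so under $\LHstar$ and $\RHsimstar$ both ${\tt LH}[\gch\psi]$ and ${\tt RH}_{\tt sim}[\gch\psi]$ hold. Applying Theorem~\ref{thm:ultraclean}(iii) with modulus $q$ and scale variable $\pchm X$ gives
\[
\sum_n \ell(n)(\gch\psi)(n)\,\euB(n/(\pchm X)) = \delta_{\gch\psi}\,F_{\pchm,k}(X) + O_{q,\euB}(X^{1/2+\eps}).
\]
Because $|c_\psi|\le 1$ and there are only $\phi(q)$ characters mod~$q$, the error terms aggregate to $O_{q,\xi,\euB}(X^{1/2+\eps})$, while $\delta_{\gch\psi}$ picks out the unique $\psi_0 \defeq \opch\cdot\gch_{0,q}$ for which $\gch\psi$ is principal mod~$q$. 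Thus the lemma reduces to verifying the Gauss-sum identity
\[
\frac{\tau(\opch)}{\pchm}\,c_{\psi_0} = C_{\pch,\xi}.
\]

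To establish this identity, I would evaluate $c_{\psi_0} = \phi(q)^{-1}\sum_{m \bmod q,\,(m,q)=1} \pch(m)\,\e(-mh/q)$ by Chinese remainder theorem. When $(k,\pchm)=1$, CRT factors the sum into a primitive Gauss sum mod~$\pchm$---equal, by primitivity of $\pch$, to $\opch(-h)\pch(k)\tau(\pch)$ if $(h,\pchm)=1$ and to $0$ otherwise---times a Ramanujan sum mod~$k$ whose argument is coprime to $k$ (since $(h,k)=1$), contributing $\mu(k)$. When $(k,\pchm)>1$, write $k=k_1k_2$ with $k_1$ supported on the primes of $(k,\pchm)$; the refined CRT split then isolates a complete geometric sum mod~$k_1$ with unit argument, which vanishes, while simultaneously $\pch(k)=0$ forces $C_{\pch,\xi}=0$, so both sides agree. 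Combining cases and using the reflection identity $\tau(\pch)\tau(\opch)=\pch(-1)\pchm$ together with $\pch(-1)\opch(-h)=\opch(h)$, one obtains
\[
\frac{\tau(\opch)}{\pchm}\,c_{\psi_0} = \frac{\mu(k)\pch(k)\opch(h)}{\phi(q)} = C_{\pch,\xi},
\]
as required. The main obstacle is this Gauss-sum bookkeeping: the formula for $c_{\psi_0}$ must be shown to be uniformly consistent with the piecewise definition of $C_{\pch,\xi}$ across every divisibility configuration of $h$, $k$, and $\pchm$. Once the CRT split above handles this cleanly, the rest of the proof is an immediate consequence of Theorem~\ref{thm:ultraclean} and elementary error accounting.
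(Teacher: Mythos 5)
Your proposal is correct and is essentially the same argument as the paper's proof, just written as the transpose of the same double sum. Where you expand the additive character $\e(-nh/q)$ on $(\Z/q\Z)^\ast$ in terms of multiplicative characters $\psi$ and then apply Theorem~\ref{thm:ultraclean}\,$(iii)$ to each product $\gch\psi$, the paper first splits the sum over residue classes $a\bmod q$ and then detects the class with characters; the two orderings yield the same reduction, with the main term coming from the unique character that combines with $\gch$ to give the principal one. Your error bookkeeping ($|c_\psi|\le 1$, $\phi(q)$ characters, $(\pchm X)^{1/2+\eps}\ll_{\pchm}X^{1/2+\eps}$) matches the paper's. The only difference is cosmetic: for the final Gauss-sum coefficient the paper simply cites \cite[Theorem~9.12]{MontVau} to evaluate $\sum_{(a,q)=1}\pch(a)\e(-ah/q)$, whereas you sketch a CRT/geometric-sum argument for the same identity; your sketch is a bit loose (the vanishing when $(k,\pchm)>1$ comes from a sum over a complete residue system modulo a prime $p\mid(k,\pchm)$ rather than ``mod $k_1$,'' and the coprime-split needs $\pchm k_1$ versus $k_2$), but the mathematical content is correct and reproduces the known formula.
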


\begin{proof} 
The character $\gch$ is supported on integers
coprime to $ \gchm $, hence the sum in \eqref{eq:meowing} is equal to
\dalign{
&\ssum{a\bmod  \gchm \\(a, \gchm )=1}\e(-ah/ \gchm )\pch(a)
\sum_{n\equiv a\bmod  \gchm }\ell(n)\euB(n/(\pchm X))\\
&\qquad=\frac{1}{\phi( \gchm )}\ssum{a\bmod  \gchm \\(a, \gchm )=1}\e(-ah/ \gchm )\pch(a)
\sum_{\gch'\bmod  \gchm }\overline{\gch'}(a)
\sum_n\ell(n)\gch'(n)\euB(n/(\pchm X)),
}
where the middle sum runs over all characters $\gch'$ mod $ \gchm $.
By Theorem~\ref{thm:ultraclean}\,$(iii)$
the total contribution from all nonprincipal characters $\gch'$
is $O_{\gch,\xi,\euB}(X^{1/2+\eps})$.
For the principal character $\gch'=\gch_{0, \gchm }$,
the contribution is
\be\label{eq:9oclock}
\frac{c}{\phi( \gchm )}\int_0^\infty\euB(u/(\pchm X))P'_{\gchm} (u)\,du
+O_{\gch,\xi,\euB}(X^{1/2+\eps}),
\ee
where we have used Theorem~\ref{thm:ultraclean}\,$(iii)$ again, and
\[
c\defeq\ssum{a\bmod  \gchm \\(a, \gchm )=1}\e(-ah/ \gchm )\pch(a);
\]
note that the integral in \eqref{eq:9oclock} is $F_{\pch,\xi}(X)$ by definition.
By \cite[Theorem~9.12]{MontVau}, we have
\[
c=\begin{cases}
\overline\pch(-h)\pch(k)\mu(k)\tau(\pch)
&\quad\hbox{if $(h,\pchm)=1$},\\
0&\quad\hbox{otherwise}.\\
\end{cases}
\]
Using the well known relation
\be\label{eq:tautauq}
\tau(\pch)\tau(\opch)=\pch(-1)\pchm
\ee
for the Gauss sums defined in \eqref{eq:red}, and
the fact that $(h, \gchm )=(h,\pchm)$, we obtain the stated result.
\end{proof}

\section{Proof of Theorem~\ref{thm:main}}

Throughout the proof, $\LHstar$ is assumed to hold.
Once and for all, let $\pch$ be a fixed primitive character mod~$\pchm$.

In one direction, suppose that $\RHsimstar$ is true. In particular, $\RHsimpch$ holds,
and the first condition of $\RHdagsimpch$ is verified. Let
$\xi\defeq h/k$ with $h,k>0$ and $(h,k)=1$,
$\euB\in C_c^\infty(\R^+)$, and  $X\ge 10$.
As $\RHsimpch$ holds, Corollary~\ref{cor:vonMangoldt-twist2} gives
\[
\begin{split}
&\ssum{\rho=\frac12+i\gamma\\\gamma>0}\xi^{-\rho}L'(\rho,\gch)
\cM_{\opch}(1-\rho)\euB\Big(\frac{\gamma}{2\pi\xi X}\Big)\\
&\qquad\qquad-\frac{\tau(\opch)}{\pchm}
\sum_n\ell(n)\gch(n)\e(-n\xi/\pchm)\euB(n/(\pchm X))\llsym{\pchm,\xi,\euB}X^{1/2+\eps}.
\end{split}
\]
Using estimate \eqref{eq:meowing} of Lemma~\ref{lem:aloevera}, we
obtain \eqref{eq:eureka}. Since $\xi$ is arbitrary, the second
condition of $\RHdagsimpch$ is verified. Thus, $\RHdagsimpch$ is true,
and the proof is complete in this direction.

In the other direction, suppose that $\RHdagsimpch$ is true.
To prove the theorem, we show that $\RHsimpcho$ holds for an arbitrary
primitive character $\pcho$ mod~$\pchom$.

Observe that if $\gcho$ is any character induced from $\pcho$,
then $\RHsimgcho$ and $\RHsimpcho$ are equivalent since
$L(s,\gcho)$ and $L(s,\pcho)$ have the same zeros in the critical strip.
Therefore, it suffices to show that $\RHsimgcho$ holds for \emph{some}
character~$\gcho$ induced from $\pcho$.
For this purpose, we define
\[
\gcho\defeq\pcho\cdot\gch_{0,\pchm\pchom},\qquad
\gch\defeq\pch\cdot\gch_{0,\pchm\pchom},\qquad
\vartheta\defeq\ogch\cdot\gcho,
\]
and turn our attention to the sum
\be\label{eq:W1}
\cW\defeq\sum_n\ell(n)\gcho(n)\euB(n/(\pchm X))
=\sum_{(n,\pchm\pchom)=1}\ell(n)\vartheta(n)\gch(n)\euB(n/(\pchm X)).
\ee
If $(n,\pchm\pchom)=1$, then (cf.\ \cite[Theorem~9.5]{MontVau})
\[
\vartheta(n)\tau(\otheta)
=\sum_{h\bmod \pchm\pchom}\otheta(h)\e(hn/(\pchm\pchom))
=\ssum{1\le h\le \pchm\pchom\\(h,\pchm\pchom)=1}
\otheta(-h)\e(-hn/(\pchm\pchom)),
\]
and it follows that
\[
\cW=\frac{1}{\tau(\otheta)}\ssum{1\le h\le \pchm\pchom\\(h,\pchm\pchom)=1}
\otheta(-h)\sum_{(n,\pchm\pchom)=1}\ell(n)
\e(-hn/(\pchm\pchom))\chi(n)\euB(n/(\pchm X)).
\]
Applying Corollary~\ref{cor:vonMangoldt-twist2} and then
\eqref{eq:eureka} to each inner sum, we have (with $\xi\defeq h/k$)
\dalign{
&\sum_{(n,\pchm\pchom)=1}\ell(n)\e(-hn/(\pchm\pchom))\chi(n)\euB(n/(\pchm X))\\
&\qquad=\frac{\pchm}{\tau(\opch)}
\ssum{\rho=\beta+i\gamma\\\beta\ge\frac12,\,\gamma>0}\xi^{-\rho}L'(\rho,\gch)
\cM_{\opch}(1-\rho)\euB\Big(\frac{\gamma}{2\pi \xi X}\Big)
+O_{\pchm,k,\euB}(X^{1/2+\eps})\\
&\qquad=\frac{\pchm}{\tau(\opch)}\,
C_{\pch,h/k}\cdot F_{\pchm,k}(X)+O_{\pchm,k,\euB}(X^{1/2+\eps});
}
thus,
\[
\cW=\frac{\pchm}{\tau(\otheta)\tau(\opch)}\,F_{\pchm,k}(X)
\ssum{1\le h\le \pchm\pchom\\(h,\pchm\pchom)=1}
\otheta(-h)C_{\pch,h/k} +O_{\pchm,k,\euB}(X^{1/2+\eps}).
\]
Finally, if $(h,\pchm\pchom)=1$, then (see \eqref{eq:CXxi-defn})
\[
\otheta(-h)\cdot C_{\pch,h/k}
=\pch(-h)\ogcho(-h)\cdot
\frac{\opch(h)\pch(k)\mu(k)}{\phi(\pchm k)}
=\ogcho(-h)\cdot\frac{\pch(-k)\mu(k)}{\phi(\pchm k)},
\]
and so
\be\label{eq:W2}
\cW=\frac{\pchm\,\pch(-k)\mu(k)}{\tau(\otheta)\tau(\opch)\phi(\pchm k)}\,F_{\pchm,k}(X)
\ssum{1\le h\le \pchm\pchom\\(h,\pchm\pchom)=1}\ogcho(-h)+O_{\pchm,k,\euB}(X^{1/2+\eps}).
\ee

We are now in a position to complete the proof. Using orthogonality,
we evaluate the sum in \eqref{eq:W2} as follows:
\be\label{eq:W3}
\ssum{1\le h\le \pchm\pchom\\(h,\pchm\pchom)=1}\ogcho(-h)
=\begin{cases}
\phi(\pchm\pchom)&\quad\hbox{if $\pcho=\one$,}\\
0&\quad\hbox{if $\pcho\ne\one$.}\\
\end{cases}
\ee
In the case that $\gcho$ is nonprincipal, we have $\pcho\ne\one$. Hence,
combining \eqref{eq:W1}$-$\eqref{eq:W3}, we derive the bound
\[
\sum_n\ell(n)\gcho(n)\euB(n/(\pchm X))\llsym{\pchm,k,\euB}X^{1/2+\eps}.
\]
The dependence on $\pchm$ can be ignored since the character $\pch$ mod~$\pchm$
is \emph{fixed}. Taking into account that $\delta_{\gcho}=0$,
the estimate \eqref{eq:bird2} of Theorem~\ref{thm:ultraclean} is verified, and
thus $\RHsimgcho$ is true. On the other hand, if $\gcho$ is principal, then
\[
\gcho=\gch_{0,q},\qquad
\delta_{\gcho}=1,\qquad
\pcho=\one,\qquad
k=1,\qquad
\gch\defeq\pch,\qquad
\vartheta\defeq\opch.
\]
Combining \eqref{eq:tautauq} and \eqref{eq:W1}$-$\eqref{eq:W3}, we get that
\dalign{
\sum_n\ell(n)\gcho(n)\euB(n/(\pchm X))&=F_{\pch,1}(X)+O_{\pchm,\euB}(X^{1/2+\eps})\\
&=\delta_{\gcho}
\int_0^\infty\euB(u/(\pchm X))P'_{\pchm}(u)\,du+O_{\pchm,\euB}(X^{1/2+\eps}).
}
Replacing $\pchm X$ by $X$, we again verify
estimate \eqref{eq:bird2} of Theorem~\ref{thm:ultraclean},
hence $\RHsimgcho$ is true in this case as well.

\end{document}